\documentclass[reqno,twoside,12pt,a4paper]{amsart}
	\topmargin=0cm
   \setcounter{page}{1}
	\textwidth=16cm
	\textheight=21.5cm
\setlength{\oddsidemargin}{2mm}
\setlength{\evensidemargin}{2mm}
\setlength{\marginparwidth}{20mm}
 \footskip=1cm
 \headsep=1cm
 
 
\usepackage{layout}
\usepackage{latexsym}
\usepackage{amsmath}
\usepackage{amssymb}
\usepackage{cases}
\usepackage{ascmac}
\usepackage{mathrsfs}
\usepackage{amsthm}
\usepackage{tikz}
\usetikzlibrary{intersections, calc, arrows}
%
%
%
%

%

%

\newtheorem{theorem}{\textbf{Theorem}}[section]
\newtheorem{lemma}{\textbf{Lemma}}[section]
\newtheorem{proposition}{\textbf{Proposition}}[section]
\newtheorem{corollary}{\textbf{Corollary}}[section]
\newtheorem{remark}{\textbf{Remark}}[section]

\numberwithin{equation}{section}

\renewcommand{\theequation}{\thesection.\arabic{equation}}

\title[Fast diffusion equation with dynamic boundary conditions]
{On a perturbed fast diffusion equation with dynamic boundary conditions}

\author[T.\ Fukao]{Takeshi Fukao}
\address{Takeshi Fukao: Department of Mathematics, Faculty of Education, 
Kyoto University of Education, 
1~Fujinomori, Fukakusa, Fushimi-ku, Kyoto~612-8522 Japan}
\email{fukao@kyokyo-u.ac.jp}

\dedicatory{}
\subjclass[2000]{}
\pagestyle{myheadings}

\begin{document}

\thispagestyle{empty}
\maketitle

\begin{abstract}
This paper discusses finite time extinction for 
a perturbed fast diffusion equation with dynamic boundary conditions. 
The fast diffusion equation has the characteristic property of decay, such as the solution 
decays to zero in a finite amount of time depending upon the initial data. 
In the target problem, some $p$-th or $q$-th order perturbation term may work to blow up within this period. 
The problem arises from the conflict between the diffusion and the blow up, in the bulk and on the boundary.  
Firstly, the local existence and uniqueness of the solution are obtained.  
Finally, a result of finite time extinction for some small initial data is presented.   
\smallskip

\noindent {\sc Key words:}
fast diffusion equation, dynamic boundary condition, well-posedness, finite time extinction

\noindent {\sc Mathematics Subject Classification 2020: 35K61, 35B40, 58J35} 
\end{abstract}

\section{Introduction}
\setcounter{equation}{0} 

In general, when discussing the well-posedness for some parabolic partial differential equations in a smooth bounded domain, 
the initial and boundary values are taken as auxiliary conditions. 
Settings with {D}irichlet, {N}eumann,  or {R}obin boundary conditions are common.  
Recently, dynamic boundary conditions have also been treated in several studies. Here, the boundary condition 
includes the time derivative. 
Moreover, the dynamic boundary condition with surface diffusion, which is the generalized {W}entzell 
(Ventcel') boundary condition \cite{Ven59}, is of great interest.  
The presence of a dynamic boundary condition in evolution problems creates
a transmission problem between the dynamics in the bulk 
and on the boundary. 
\smallskip

In this paper, we consider the fast diffusion equation with a dynamic boundary condition of the following form:
\begin{align}
	\partial _t u -\Delta u^m + a u^m = \lambda u^p 
	& \quad \mbox{in } \Omega, \quad t >0,
	\label{fast1}\\
	\partial _t u +\partial _{\boldsymbol{\nu}} u^m 
	-\Delta_\Gamma u ^m +b u ^m =  \mu u^q 
	& \quad \mbox{on } \Gamma, \quad t>0,
	\label{fast2}
\end{align}
where $\Omega \subset \mathbb{R}^3$ is a bounded domain with 
smooth boundary $\Gamma:=\partial \Omega$. 
We set up parameters $0<m \le 1$, $p, q > 1$, and $(a,b), (\lambda, \mu) \in \{ (1,0), (0,1)\}$.
The symbols $\partial _t $, $\Delta$, $\partial _{\boldsymbol{\nu}}$, and 
$\Delta_\Gamma$ denote the time derivative, the {L}aplacian, the normal derivative with respect to 
the outward unit normal vector $\boldsymbol{\nu}$ on $\Gamma$, and 
the {L}aplace--{B}eltrami operator (see, e.g., \cite{Gri09}), respectively. 
It is worth noting that $\Delta _\Gamma$ plays an important role in this paper. 
The second equation \eqref{fast2} is called the dynamic boundary condition. 
It describes the dynamics
on the boundary through the appearance of the time derivative. 
\smallskip

In general, if $a=\lambda=0$, then we can categorize the nonlinear parabolic equation \eqref{fast1} as a 
\emph{fast diffusion equation}, as compared with a 
\emph{heat equation} ($m=1$) and a \emph{porous medium equation} ($m>1$) (see, e.g., \cite{Vaz07}). 
The fast diffusion equation has the characteristic property of decay 
\cite{Sab62}. More precisely, the solution 
decays to zero in a finite time that depends upon the initial data. 
This is called \emph{finite time extinction}. 
In this paper, we consider the finite time extinction for the perturbed fast diffusion equation 
\eqref{fast1}--\eqref{fast2} with some initial condition. 
The $p$-th or $q$-th order perturbation term may work to the 
blow up with in this time. 
Several studies have been conducted on the 
perturbed fast diffusion equation with the homogeneous {D}irichlet boundary condition,  
\begin{align*}
	\partial _t u -\Delta u^m  = u^p 
	& \quad \mbox{in } \Omega, \quad t>0,
	\\
	u=0 
	& \quad \mbox{on } \Gamma, \quad t >0, 
	\\
	u(0)=u_0 
	& 
	\quad \mbox{in } \Omega,
\end{align*}
for example, \cite{Bal77, Fuj70} for $m=1$.  
For $0<m<1$, we refer to 
\cite{AK13, FF88, FF89, FF90, Filo87, Filo88, LW05, Vit00} and references therein.  
The behaviour of the solution differs from the case of $\Omega=\mathbb{R}^3$ (see \cite{Leo96, MM95}, for example). 
On the other hand, 
some studies related to \eqref{fast1} considered 
the nonlinear boundary condition \cite{CFQ91, Fila89, FK01, Filo92, LW07, ST20, Wol93}, 
and the dynamic boundary condition
\cite{FIK13, FIK15, FIK16, FIK17, FQ99, Gal12, GM14, Vit05}. 
The case of $m>1$ is also interesting (see \cite{FM92} for example, that considers equations similar to \eqref{fast1}--\eqref{fast2}), 
and has been the subject of several studies.
Many of which are related to the pioneering blow up results 
of \cite{Fuj66, Kap63}. 
\smallskip

To clarify the setting of our problem, we present the corresponding problems 
with parameter settings as follows:  
\begin{align}
	& (a,b,\lambda,\mu)=(1,0,1,0), 
	\quad 
	& 
	\begin{cases}
	\partial _t u -\Delta u^m + u^m = u^p
	& \quad \mbox{in } \Omega,
	\\
	\partial _t u +\partial _{\boldsymbol{\nu}} u^m 
	-\Delta_\Gamma u ^m =0 
	& \quad \mbox{on } \Gamma;
	\end{cases}
	\label{1010}
	\\
	& (a,b,\lambda,\mu)=(0,1,1, 0), 
	\quad 
	& 	
	\begin{cases}
	\partial _t u -\Delta u^m  = u^p  
	& \quad \mbox{in } \Omega,
	\\
	\partial _t u +\partial _{\boldsymbol{\nu}} u^m 
	-\Delta_\Gamma u ^m +u^m=0 
	& \quad \mbox{on } \Gamma;
	\end{cases}
	\label{0110}
	\\
	& (a,b,\lambda,\mu)=(1,0,0,1), 
	\quad 
	&	
	\begin{cases}
	\partial _t u -\Delta u^m + u^m =0 
	& \quad \mbox{in } \Omega,
	\\
	\partial _t u +\partial _{\boldsymbol{\nu}} u^m 
	-\Delta_\Gamma u ^m = u ^q  
	& \quad \mbox{on } \Gamma;
	\end{cases}
	\label{1001}
	\\
	& (a,b,\lambda,\mu)=(0,1,0,1), 
	\quad 
	& 
	\begin{cases}
	\partial _t u -\Delta u^m  =0 
	& \quad \mbox{in } \Omega,
	\\
	\partial _t u +\partial _{\boldsymbol{\nu}} u^m 
	-\Delta_\Gamma u^m +u^m = u^q 
	& \quad \mbox{on } \Gamma,
	\end{cases}
	\label{0101}
\end{align}
where the initial condition is omitted for simplicity. 
As a remark, we could also mention the case $(a,b)=(1,1)$.
However, we do not consider it in the present paper since it is trivial. 
\smallskip

In this paper, applying the method of {F}ilo \cite{Filo87}, 
we discuss the local existence and uniqueness of the non-negative solution 
of \eqref{fast1}--\eqref{fast2} for some suitable non-negative initial data. 
Following results by {F}ila and {F}ilo \cite{FF89, FF90},
we obtain a result of finite time extinction for some small initial data. 
\smallskip

We present a brief outline of the paper along with 
a short description of the various items. 
\smallskip

In Section 2, we state the main theorems, 
which are related to the finite time extinction
after establishing our notation. 
Let $(\lambda, \mu)=(0,1)$; for some small initial data, the unique solution of \eqref{1001} or \eqref{0101}
decays to zero in a finite time when $1/5 < m <1$.  This means that 
we can take any $q > 1$. 
On the other hand, if $(\lambda, \mu)=(1,0)$, we can obtain the same result to \eqref{1010} or \eqref{0110}
under the additional assumption $1<p < 5m $. 
\smallskip

In Section 3, we consider an auxiliary problem. 
We discuss the well-posedness of  
some globally {L}ipschitz perturbation based on {F}ilo \cite{Filo87}. 
Firstly, we set up a time discretization scheme. 
Thus, we obtain a solution for an elliptic problem applying the maximal monotone theory.
Secondly, correcting the suitable uniform estimates, we prove that 
a pair of piecewise linear functions converges to a candidate solution to the auxiliary problem. 
Using fundamental inequalities, we also obtain estimates for time derivatives since
the initial data belongs to $H^1 \cap L^\infty$.  
This is a point of emphasis, 
because the suitable regularity of the time derivative is a special property for the fast diffusion equation
(see \cite[Theorem~2]{Filo88} and Remark~3.1). 
Moreover, to obtain a regular solution, we use 
the bootstrap argument for the dynamic boundary condition. 
Thanks to surface diffusion, this argument works well. 
This is another point of emphasis because the equation is treated as a weak 
or very weak formulation of the porous media equation in general. 
The solution satisfies the equation in almost everywhere sense. 
It is a benefit of surface diffusion. 
\smallskip

In Section 4, we prove the main theorems step by step. 
Firstly, we obtain the local existence of the solution to the original problem under a general setting, 
that is, $0<m \le 1$, $p,q > 1$. We use a standard method of the cut off function. 
The solution also satisfies an energy inequality and equality of 
conservation with respect to the $L^{(1+m)/m}$-norm. 
The proof of the main theorems is based on the effective use of this inequality and equality. 
Next, under the assumption $1/5 < m <1$, 
we prove the property of the finite time extinction for 
$(\lambda, \mu)=(0,1)$, namely \eqref{1001} or \eqref{0101}. 
{\sc Figure} 1 shows the strategy of the proof of the theorem.  
To complete the proof of finite time extinction, we need to discuss the
invariance of some stable set. 
The essence of the proof is based upon {F}ila and {F}ilo \cite{FF89}. 
If $(\lambda, \mu)=(1,0)$, we need additional assumption $1<p < 5m$. 
\smallskip 

Here, let us present a detailed index of sections and subsections.
\begin{itemize}
 \item[1.] Introduction
 \item[2.] Main theorems
  \begin{itemize}
  \item[2.1.] Notation
  \item[2.2.] Main theorems
 \end{itemize}
 \item[3.] Global existence for globally Lipschitz perturbations
 \begin{itemize}
  \item[3.1.] Time discretization
  \item[3.2.] Uniform estimates
  \item[3.3.] Proof of Proposition 3.1
 \end{itemize}
 \item[4.] Proof of main theorems
  \begin{itemize}
  \item[4.1.] Finite time extinction
  \item[4.2.] Proof of invariance
 \end{itemize}
 \item[] Appendix
\end{itemize}

\section{Main theorems}

In this section, we present the main theorems. 
We first set up our problem in mathematical fundamental settings. 

\subsection{Notation}

Let $T>0$ be the finite time and $Q:=(0,T) \times \Omega$, $\Sigma:=(0,T)\times \Gamma$. 
We use the following notations: $H:=L^2(\Omega)$, 
$V:=H^1(\Omega)$, and $W:=H^2(\Omega)$, 
which are {H}ilbert spaces with standard norms $|\cdot |_X$
and inner products $(\cdot,\cdot)_X$, where $X$ is the corresponding space. 
Analogously, $H_\Gamma:=L^2(\Gamma)$, $V_\Gamma:=H^1(\Gamma)$, and 
$W_\Gamma:=H^2(\Gamma)$. 
For the pair of functions $z$ on $\Omega$ and $z_\Gamma$ on $\Gamma$, 
we use the bold character $\boldsymbol{z}:=(z,z_\Gamma)$. 
Also, we have the following definitions. 
\begin{align*}
	& \boldsymbol{H}:=H \times H_\Gamma, 
	\\
	& \boldsymbol{V}:=\bigl\{ \boldsymbol{z} \in V \times V_\Gamma \ : \  z_\Gamma = z_{|_\Gamma} \quad \mbox{a.e.\ on } \Gamma \bigr\},
	\\
	& \boldsymbol{W}:=(W \times W_\Gamma) \cap \boldsymbol{V}, 
	\\
	& \boldsymbol{L}^\infty :=L^\infty (\Omega) \times L^\infty (\Gamma).
\end{align*}
The symbol $z_{|_\Gamma}$ denotes the trace of $z$ to the boundary $\Gamma$. 
We remark that for the function $\boldsymbol{z}=(z,z_\Gamma) \in \boldsymbol{H}$, 
the first component $z$ and the second component $z_\Gamma$ are completely independent because 
of the lack of regularity. 
\smallskip

Subsequently, we set $\alpha:=1/m$ for simplicity. 
We define functions ${\rm sgn}, \gamma, g, g_\Gamma:\mathbb{R} \to \mathbb{R}$ by 
\begin{gather*}
	{\rm sgn} r :=
	\begin{cases}
	1 &  \mbox{if } r >0, \\
	0 &  \mbox{if } r=0, \\
	-1 &  \mbox{if }r <0,
	\end{cases}
	\quad 
	\gamma(r):=|r|^\alpha {\rm sgn} r =
	\begin{cases}
	r^\alpha &  \mbox{if } r >0, \\
	0 &  \mbox{if } r=0, \\
	-(-r)^\alpha &  \mbox{if }r <0,
	\end{cases}
	\\
	g(r):=|r|^{p-1}r=|r|^p {\rm sgn} r, \quad 
	g_\Gamma(r):=|r|^{q-1}r=|r|^q {\rm sgn} r.  
\end{gather*}
Moreover, we put $\beta:=\gamma^{-1}$. Then, 
$\gamma$, $\beta$, $g$, and $g_\Gamma$ are monotone functions.
Now, we can set up the problem of perturbed fast diffusion equation with a dynamic boundary condition as 
follows: 
Find $v: Q \to [0,\infty) $, $v_\Gamma: \Sigma \to [0,\infty)$ satisfying the following system
\begin{align}
	\partial _t \gamma(v) -\Delta v +a v =\lambda g\bigl( \gamma (v) \bigr)  
	& \quad \mbox{a.e.\ in } Q,
	\label{f1}\\
	v_{|_\Gamma}=v_\Gamma 
	& \quad \mbox{a.e.\ on } \Sigma, 
	\label{f2}\\
	\partial _t \gamma(v_\Gamma) + \partial _{\boldsymbol{\nu}} v 
	- \Delta_\Gamma v_\Gamma + b v_\Gamma =\mu g_\Gamma \bigl( \gamma (v_\Gamma) \bigr)  
	& \quad \mbox{a.e.\ on } \Sigma,
	\label{f3}\\
	v(0)=v_0
	& \quad \mbox{a.e.\ in } \Omega, 
	\label{f4}\\
	v_\Gamma(0)=v_{\Gamma,0}
	& \quad \mbox{a.e.\ on }\Gamma.
	\label{f5}
\end{align}
The third equation \eqref{f3} is called the dynamic boundary condition since it includes the time derivative. 
Therefore, we need two initial data for $v$ and $v_\Gamma$, or more specifically, conditions \eqref{f4} and \eqref{f5} with given 
data $v_0:\Omega \to [0, \infty)$ and $v_{\Gamma,0}:\Gamma \to  [0, \infty)$, respectively.
Compared with the previous result of {F}ilo \cite{Filo87}, the function $\boldsymbol{v}:=(v,v_\Gamma)$ will satisfy
the equations in almost everywhere sense in 
\eqref{f1} and \eqref{f3}, respectively, thanks to the presence of surface diffusion. 
In other words, we can obtain sufficient regularity.  
\smallskip

To discuss finite time extinction based on the previous results 
(see, e.g., \cite{FF89, FF90, Ish77, Lio69, Nak85, Ota81, Sat68, Tsu72}), we introduce the stable set 
${\mathcal W}$ with corresponding energy $J$ as follows: put 
\begin{equation*}
	p_*:=\lambda p+\mu q,
\end{equation*}
namely $p_*=p$ if $(\lambda, \mu)=(1,0)$ and 
$p_*=q$ if $(\lambda, \mu)=(0,1)$
\begin{gather*}
	{\mathcal W}:=\bigl\{ \boldsymbol{z} \in \boldsymbol{V} \setminus \{\boldsymbol{0} \} \ : \ 
	z \ge 0, z_\Gamma \ge 0, J(\boldsymbol{z})<d, 2 \varphi_1(\boldsymbol{z}) > (\alpha p_*+1)\varphi_2(\boldsymbol{z}) \bigr\} \cup \{ \boldsymbol{0} \}, 
	\\
	J(\boldsymbol{z}):=\varphi_1(\boldsymbol{z})-\varphi_2(\boldsymbol{z}), \\
	\varphi_1(\boldsymbol{z})
	:=
	\frac{1}{2} \int_\Omega |\nabla z|^2dx 
	+ 
	\frac{a}{2}\int_\Omega |z|^2 dx 
	+ 
	\frac{1}{2} \int_\Gamma |\nabla_\Gamma z_\Gamma|^2 d\Gamma 
	+ 
	\frac{b}{2}\int_\Gamma |z_\Gamma |^2 d\Gamma
	, \\
	\varphi_2(\boldsymbol{z}) := 
	\frac{1}{\alpha p_* +1} \left( \lambda \int_\Omega |z|^{\alpha p+1} dx + 
	\mu\int_\Gamma |z_\Gamma|^{\alpha q+1} d\Gamma \right) .
\end{gather*}
In the definition of ${\mathcal W}$, the constant $d$ is called the \emph{depth of the potential well}, and is now defined by 
\begin{equation}
	d:=\inf \bigl\{ J(\boldsymbol{z}) \ : \ 
	\boldsymbol{z} \in \boldsymbol{V} \setminus \{ \boldsymbol{0} \}, 
	2\varphi_1(\boldsymbol{z})= (\alpha p_* +1) \varphi_2(\boldsymbol{z}) \bigr\}.
	\label{d}
\end{equation}
This constant is characterized by the optimal constant of some estimate between 
$\varphi_1(\boldsymbol{z})$ and $\varphi_2(\boldsymbol{z})$, which will be discussed in Remark~4.2.

\subsection{Main theorems}

The main theorems are related to the finite time extinction for the solution 
$\boldsymbol{v}=(v,v_\Gamma)$ of \eqref{f1}--\eqref{f5}. 
Let $(a,b) \in \{(1,0), (0,1)\}$, $(\lambda, \mu)=(0,1)$. 
For the cases of \eqref{1001} or \eqref{0101}, there are no restrictions for $q > 1$. 
\begin{theorem} 
\label{ext1}
Assume that $1/5 < m <1$, $q > 1$, and $\boldsymbol{v}_0:=(v_0,v_{\Gamma,0}) 
\in {\mathcal W} \cap\boldsymbol{L}^\infty$.  
Then, there exists $T_{\rm ext} \in (0,\infty)$ depending on 
$|v_0|_{L^{(1+m)/m}(\Omega)}$ and 
$|v_{\Gamma,0}|_{L^{(1+m)/m}(\Gamma)}$ such that 
$v(t)=0$ a.e.\ in $\Omega$, $v_{\Gamma}(t)= 0$ a.e.\ on $\Gamma$ for all $t \ge T_{\rm ext}$. Moreover, 
there exists a positive constant $C(m)>0$ depending on $m$ such that 
\begin{equation}
	\bigl|v(t) \bigr|_{L^{(1+m)/m}(\Omega)}+ 
	\bigl|v_\Gamma (t) \bigr|_{L^{(1+m)/m}(\Gamma)}
	\le C(m) (T_{\rm ext}-t)^{m/(1-m)}
	 \label{upper}
\end{equation} 
for all $t \in [0,T_{\rm ext}]$. 
\end{theorem}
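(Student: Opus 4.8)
The plan is to run a Fila--Filo-type potential-well argument driven by an ordinary differential inequality for the Lyapunov functional
\[
y(t):=\bigl|v(t)\bigr|_{L^{(1+m)/m}(\Omega)}^{(1+m)/m}+\bigl|v_\Gamma(t)\bigr|_{L^{(1+m)/m}(\Gamma)}^{(1+m)/m},
\]
recalling that $(1+m)/m=\alpha+1$. First I would record the energy identity obtained by testing the combined weak formulation of \eqref{f1}--\eqref{f3} (in which $\partial_{\boldsymbol\nu}v$ on $\Gamma$ is eliminated through \eqref{f3}) with $\boldsymbol\varphi=\boldsymbol v(t)\in\boldsymbol V$ --- admissible because $\boldsymbol v_0\in\boldsymbol L^\infty$ gives the solution constructed in Sections~3--4 enough regularity, and $v,v_\Gamma\ge 0$. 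Since $\partial_t\gamma(v)\,v=\tfrac{\alpha}{\alpha+1}\partial_t(v^{\alpha+1})$ and $g(\gamma(v))\,v=v^{\alpha p+1}$ (and likewise on $\Gamma$), this collapses, with $I(\boldsymbol z):=2\varphi_1(\boldsymbol z)-(\alpha p_*+1)\varphi_2(\boldsymbol z)$, to
\[
\frac{\alpha}{\alpha+1}\,y'(t)=-I\bigl(\boldsymbol v(t)\bigr)\qquad\text{for a.e. }t>0 .
\]
Testing instead with $\partial_t\boldsymbol v(t)$ produces $\tfrac{d}{dt}J(\boldsymbol v(t))\le 0$, hence $J(\boldsymbol v(t))\le J(\boldsymbol v_0)<d$ for all $t\ge 0$.

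Next I would invoke the invariance $\boldsymbol v(t)\in\mathcal W$ for all $t\ge 0$ (established separately in Section~4.2 by a continuity argument: non-negativity of the solution is preserved, $J$ is non-increasing, and the orbit cannot reach the set $\{\,2\varphi_1=(\alpha p_*+1)\varphi_2\,\}$ since there $J\ge d$ by the very definition \eqref{d}). Membership in $\mathcal W$ gives only $I(\boldsymbol v(t))>0$, and the crux is to upgrade this to a coercive bound. Using the optimal constant $C_S$ in $\varphi_2(\boldsymbol z)\le C_S\,\varphi_1(\boldsymbol z)^{(\alpha p_*+1)/2}$ --- the inequality that characterizes $d$ in Remark~4.2, and whose existence for every $q>1$ is exactly why the case $(\lambda,\mu)=(0,1)$ carries no upper restriction on $q$, the boundary $\Gamma$ being two-dimensional so that $V_\Gamma\hookrightarrow L^s(\Gamma)$ for all finite $s$ (in contrast to the $(\lambda,\mu)=(1,0)$ theorem, where the bulk term forces $\alpha p+1<6$, i.e.\ $p<5m$) --- membership in $\mathcal W$ forces $\varphi_1(\boldsymbol z)<\rho_0:=\bigl(2/((\alpha p_*+1)C_S)\bigr)^{2/(\alpha p_*-1)}$, with $d=\tfrac{\alpha p_*-1}{\alpha p_*+1}\rho_0$, whence
\[
I\bigl(\boldsymbol v(t)\bigr)\ \ge\ 2\varphi_1\bigl(\boldsymbol v(t)\bigr)\Bigl(1-\bigl(\varphi_1(\boldsymbol v(t))/\rho_0\bigr)^{(\alpha p_*-1)/2}\Bigr)\ \ge\ \eta\,\varphi_1\bigl(\boldsymbol v(t)\bigr),\qquad \eta>0,
\]
the uniformity in $t$ coming from $\varphi_1(\boldsymbol v(t))\le\tfrac{\alpha p_*+1}{\alpha p_*-1}J(\boldsymbol v_0)$ being bounded away from $\rho_0$ (indeed $\varphi_1(\boldsymbol v(t))\to 0$ as $t\to\infty$, which lets $\eta$ be pushed towards $2$).

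Then I would close the loop with a Sobolev--Poincaré inequality: since $\alpha+1=(1+m)/m\le 6$ precisely when $m\ge 1/5$, one has $V\hookrightarrow L^{\alpha+1}(\Omega)$, and using $v_{|_\Gamma}=v_\Gamma$ to supply the missing zeroth-order term in the cases $a=0$ or $b=0$,
\[
\varphi_1(\boldsymbol z)\ \ge\ c_0\bigl(|z|_{L^{\alpha+1}(\Omega)}^2+|z_\Gamma|_{L^{\alpha+1}(\Gamma)}^2\bigr)\ \ge\ c_0\,\bigl(|z|_{L^{\alpha+1}(\Omega)}^{\alpha+1}+|z_\Gamma|_{L^{\alpha+1}(\Gamma)}^{\alpha+1}\bigr)^{2/(\alpha+1)}.
\]
Chaining the three displays yields the autonomous inequality $y'(t)\le-\kappa\,y(t)^{2/(\alpha+1)}$ with exponent $2/(\alpha+1)=2m/(1+m)\in(0,1)$; equivalently $\tfrac{d}{dt}\,y(t)^{(1-m)/(1+m)}\le-\kappa'<0$, so $y(t)\le\bigl(y(0)^{(1-m)/(1+m)}-\kappa' t\bigr)_+^{(1+m)/(1-m)}$. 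Setting $T_{\rm ext}:=y(0)^{(1-m)/(1+m)}/\kappa'$ --- which depends only on $|v_0|_{L^{(1+m)/m}(\Omega)}$ and $|v_{\Gamma,0}|_{L^{(1+m)/m}(\Gamma)}$ --- gives $v(t)=0$ a.e.\ in $\Omega$ and $v_\Gamma(t)=0$ a.e.\ on $\Gamma$ for $t\ge T_{\rm ext}$ (recall $y$ is non-increasing and $v,v_\Gamma\ge 0$), while for $t\le T_{\rm ext}$,
\[
\bigl|v(t)\bigr|_{L^{(1+m)/m}(\Omega)}+\bigl|v_\Gamma(t)\bigr|_{L^{(1+m)/m}(\Gamma)}\ \le\ 2^{\alpha/(\alpha+1)}\,y(t)^{1/(\alpha+1)}\ \le\ C(m)\,(T_{\rm ext}-t)^{m/(1-m)},
\]
which is \eqref{upper}. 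The main obstacle is the middle step: establishing the invariance of $\mathcal W$ together with the uniform-in-time coercivity $I(\boldsymbol v(t))\ge\eta\,\varphi_1(\boldsymbol v(t))$, which must exploit the precise definition \eqref{d} of the well depth and the monotonicity of $J$ in tandem; once that gap is secured, the remainder is the routine Lyapunov/ODE comparison.
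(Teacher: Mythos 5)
Your proposal follows essentially the same route as the paper: the identity \eqref{vol0} for $Y(\boldsymbol{v}(t))$ (your $y$ is $\tfrac{\alpha+1}{\alpha}Y$), the energy inequality \eqref{energyst} giving monotonicity of $J$, the invariance of $\mathcal W$ (deferred, as in the paper, to a separate lemma), the uniform coercivity $2\varphi_1-(\alpha p_*+1)\varphi_2\ge\eta\,\varphi_1$, the Sobolev embedding $V\hookrightarrow L^{\alpha+1}(\Omega)$ valid precisely because $\alpha+1<6$ when $m>1/5$, and the ODE comparison $y'\le-\kappa y^{2/(\alpha+1)}$. Your algebraic derivation of $\eta$ from the explicit formula $d=\tfrac{\alpha p_*-1}{\alpha p_*+1}\rho_0$ is a correct and more quantitative version of the paper's appeal to Remark~4.2 and {\sc Figure}~1 (the paper produces the same constant in the guise of $\varepsilon_1$ with $(1-\varepsilon_1)2\varphi_1\ge(\alpha p_*+1)\varphi_2$), and your uniform bound $\varphi_1(\boldsymbol{v}(t))\le\tfrac{\alpha p_*+1}{\alpha p_*-1}d_0<\rho_0$ is exactly the right observation.

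There is, however, one genuine omission: you run the entire argument on $[0,\infty)$ without ever establishing that the solution is global. Proposition~4.1 only provides a solution on $[0,T_{\rm max})$, where $T_{\rm max}$ is obtained from a cut-off construction whose $L^\infty$ bound grows exponentially in time; a priori $T_{\rm max}$ could be finite (blow-up of the $L^\infty$ norm), in which case your differential inequality only holds on $[0,T_{\rm max})$ and extinction at $T_{\rm ext}$ cannot be concluded when $T_{\rm max}\le T_{\rm ext}$. The paper addresses this at the very first step of its proof by invoking a Moser-type iteration (\cite[Lemma~2.5]{FF90}, discussed in Remark~4.1, going back to Alikakos and Nakao) to obtain an $L^\infty$ bound uniform in $t\in[0,T_{\rm max})$, which forces $T_{\rm max}=\infty$; this is in fact a second place where the hypotheses $q>1$ (resp.\ $p<5m$) are used. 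Your plan needs this step, or at least an argument that extinction occurs before any possible blow-up, before the Lyapunov/ODE comparison can deliver the conclusion.
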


\begin{corollary} 
Assume that $1/5 < m <1$, $q>1$, $v_0 \in H^1_0(\Omega) \cap L^\infty(\Omega)$ with 
$v_0 \ge 0$, and 
$0< |v_0|_V^2<2d $, namely $\boldsymbol{v}_0:=(v_0, 0)$. 
Then, there exists $T_{\rm ext} \in (0,\infty)$ depending on 
$|v_0|_{L^{(1+m)/m}(\Omega)}$ 
such that 
$v(t)=0$ a.e.\ in $\Omega$, $v_{\Gamma}(t)= 0$ a.e.\ on $\Gamma$ for all $t \ge T_{\rm ext}$. 
Moreover, the same kind of estimate from above \eqref{upper} holds.
\end{corollary}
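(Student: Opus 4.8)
The plan is to obtain the corollary as an immediate consequence of Theorem~\ref{ext1}: the only thing to verify is that the pair $\boldsymbol{v}_0 := (v_0, 0)$ singled out in the statement actually belongs to $\mathcal{W} \cap \boldsymbol{L}^\infty$, after which the theorem applies without modification. First I would record the easy memberships. Since $v_0 \in H^1_0(\Omega)$, the trace of $v_0$ on $\Gamma$ vanishes, so $v_{\Gamma,0} = 0 = (v_0)_{|_\Gamma}$ and hence $\boldsymbol{v}_0 = (v_0,0) \in \boldsymbol{V}$; moreover $\boldsymbol{v}_0 \in \boldsymbol{L}^\infty$ because $v_0 \in L^\infty(\Omega)$ and $0 \in L^\infty(\Gamma)$. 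The hypothesis $|v_0|_V^2 > 0$ gives $\boldsymbol{v}_0 \neq \boldsymbol{0}$, while the sign conditions $v_0 \ge 0$ and $v_{\Gamma,0} = 0 \ge 0$ are built into the assumptions.

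Next I would evaluate $J$, $\varphi_1$, $\varphi_2$ at $\boldsymbol{v}_0$. Because $v_{\Gamma,0} \equiv 0$ and $(\lambda,\mu) = (0,1)$, every surface integral drops out: in particular $\varphi_2(\boldsymbol{v}_0) = \frac{1}{\alpha q + 1}\int_\Gamma |v_{\Gamma,0}|^{\alpha q + 1}\, d\Gamma = 0$, so that $J(\boldsymbol{v}_0) = \varphi_1(\boldsymbol{v}_0)$. For either admissible choice $(a,b) \in \{(1,0),(0,1)\}$ one has $\varphi_1(\boldsymbol{v}_0) = \frac{1}{2}\int_\Omega |\nabla v_0|^2\, dx + \frac{a}{2}\int_\Omega |v_0|^2\, dx \le \frac{1}{2}|v_0|_V^2$, and therefore the hypothesis $|v_0|_V^2 < 2d$ forces $J(\boldsymbol{v}_0) = \varphi_1(\boldsymbol{v}_0) < d$. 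It also forces $\varphi_1(\boldsymbol{v}_0) > 0$: when $a = 1$ this is clear, and when $a = 0$ one uses that $v_0 \not\equiv 0$ together with Poincar\'e's inequality on $H^1_0(\Omega)$ to get $\int_\Omega |\nabla v_0|^2\, dx > 0$. Since $\varphi_2(\boldsymbol{v}_0) = 0$, the remaining requirement $2\varphi_1(\boldsymbol{v}_0) > (\alpha p_* + 1)\varphi_2(\boldsymbol{v}_0)$ collapses to $\varphi_1(\boldsymbol{v}_0) > 0$ and thus holds. Hence $\boldsymbol{v}_0 \in \mathcal{W} \cap \boldsymbol{L}^\infty$.

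Finally, Theorem~\ref{ext1} applied to this $\boldsymbol{v}_0$ produces $T_{\rm ext} \in (0,\infty)$ with $v(t) = 0$ a.e.\ in $\Omega$ and $v_\Gamma(t) = 0$ a.e.\ on $\Gamma$ for all $t \ge T_{\rm ext}$, together with the decay bound \eqref{upper}; and since $v_{\Gamma,0} = 0$ makes $|v_{\Gamma,0}|_{L^{(1+m)/m}(\Gamma)} = 0$, the dependence of $T_{\rm ext}$ reduces to $|v_0|_{L^{(1+m)/m}(\Omega)}$ alone, as claimed. There is no genuine analytic difficulty here; the only step requiring a little care is treating the two possibilities for $(a,b)$ uniformly and observing that the vanishing boundary datum collapses both defining inequalities of $\mathcal{W}$ — the potential-well inequality $J(\boldsymbol{v}_0) < d$ and the inequality $2\varphi_1(\boldsymbol{v}_0) > (\alpha p_* + 1)\varphi_2(\boldsymbol{v}_0)$ — to the single, readily checked condition $0 < \varphi_1(\boldsymbol{v}_0) < d$, which is precisely $0 < |v_0|_V^2 < 2d$ in the case $(a,b) = (1,0)$ and is implied by it in the case $(a,b) = (0,1)$.
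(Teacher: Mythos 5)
Your proposal is correct and is exactly the argument the paper leaves implicit: the corollary follows from Theorem~2.1 once one checks that $\boldsymbol{v}_0=(v_0,0)\in\mathcal{W}\cap\boldsymbol{L}^\infty$, which your verification (vanishing trace, $\varphi_2(\boldsymbol{v}_0)=0$ so both defining inequalities of $\mathcal{W}$ reduce to $0<\varphi_1(\boldsymbol{v}_0)<d$, handled uniformly for both choices of $(a,b)$) carries out correctly.
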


Let $(a,b) \in \{(1,0), (0,1)\}$, $(\lambda, \mu)=(1,0)$. 
For \eqref{1010} and \eqref{0110}
under the restriction of $p > 1$, we can discuss the finite time extinction.
\begin{theorem} 
\label{ext2}
Assume that $1/5 < m <1$, $1 < p < 5m$, and 
$\boldsymbol{v}_0 \in {\mathcal W} \cap \boldsymbol{L}^\infty$.  
Then, there exists $T_{\rm ext} \in (0,\infty)$ depending on 
$|v_0|_{L^{(1+m)/m}(\Omega)}$ and 
$|v_{\Gamma,0}|_{L^{(1+m)/m}(\Gamma)}$ such that 
$v(t)=0$ a.e.\ in $\Omega$, $v_{\Gamma}(t)= 0$ a.e.\ on $\Gamma$ for all $t \ge T_{\rm ext}$. 
Moreover, the same kind of estimate from above \eqref{upper} holds.
\end{theorem}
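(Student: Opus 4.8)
The plan is to run a potential-well argument of Fila--Filo type adapted to the transmission structure; the proof parallels that of Theorem~\ref{ext1}, the only new point being that $1<p<5m$, i.e.\ $\alpha p+1<6$, is exactly what makes the Sobolev embedding $V\hookrightarrow L^{\alpha p+1}(\Omega)$ available for the bulk perturbation (on the two-dimensional boundary every exponent is admissible, which is why no restriction on $q$ was needed in Theorem~\ref{ext1}). First I would invoke the local existence result proved at the start of Section~4, valid for all $0<m\le1$ and $p,q>1$: for $\boldsymbol{v}_0\in{\mathcal W}\cap\boldsymbol{L}^\infty$ there is a unique non-negative solution $\boldsymbol{v}=(v,v_\Gamma)$ on a maximal interval $[0,T_{\max})$, enjoying the regularity of Proposition~3.1, which in particular makes $t\mapsto\varphi_1(\boldsymbol{v}(t))$ and $t\mapsto\varphi_2(\boldsymbol{v}(t))$ continuous. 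Testing \eqref{f1} and \eqref{f3} with $(\partial_t v,\partial_t v_\Gamma)$ and using that $\gamma$ is nondecreasing gives the energy inequality
\[
	\frac{d}{dt}J\bigl(\boldsymbol{v}(t)\bigr) \le -\int_\Omega\gamma'(v)|\partial_t v|^2\,dx-\int_\Gamma\gamma'(v_\Gamma)|\partial_t v_\Gamma|^2\,d\Gamma \le 0,
\]
while testing with $(v,v_\Gamma)$, integrating by parts and cancelling the $\partial_{\boldsymbol{\nu}}v$ terms via \eqref{f2} gives, since $p_*=p$ and $\alpha+1=(1+m)/m$,
\[
	\frac{\alpha}{\alpha+1}\,\frac{d}{dt}\,I(t) = -\bigl(2\varphi_1(\boldsymbol{v}(t))-(\alpha p+1)\varphi_2(\boldsymbol{v}(t))\bigr),\quad I(t):=\bigl|v(t)\bigr|_{L^{(1+m)/m}(\Omega)}^{(1+m)/m}+\bigl|v_\Gamma(t)\bigr|_{L^{(1+m)/m}(\Gamma)}^{(1+m)/m}.
\]

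Next I would prove the invariance of ${\mathcal W}$ (Section~4.2): $\boldsymbol{v}(t)\in{\mathcal W}$ for all $t\in[0,T_{\max})$. Non-negativity is built into the solution, and the energy inequality gives $J(\boldsymbol{v}(t))\le J(\boldsymbol{v}_0)<d$. For the remaining condition $2\varphi_1(\boldsymbol{v}(t))>(\alpha p+1)\varphi_2(\boldsymbol{v}(t))$ I would argue by contradiction: by continuity, if it failed there would be a first time $t_0$ with $2\varphi_1(\boldsymbol{v}(t_0))=(\alpha p+1)\varphi_2(\boldsymbol{v}(t_0))$; if $\boldsymbol{v}(t_0)\ne\boldsymbol{0}$ the definition \eqref{d} of $d$ forces $J(\boldsymbol{v}(t_0))\ge d$, contradicting $J(\boldsymbol{v}(t_0))\le J(\boldsymbol{v}_0)<d$, while if $\boldsymbol{v}(t_0)=\boldsymbol{0}$ extinction has already occurred and $\boldsymbol{0}\in{\mathcal W}$, so there is nothing left to prove.

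With invariance in hand I would quantify the well: for $\boldsymbol{z}\in{\mathcal W}$ with $J(\boldsymbol{z})\le J(\boldsymbol{v}_0)$ the optimal-constant inequality $\varphi_2(\boldsymbol{z})\le C^{*}\varphi_1(\boldsymbol{z})^{(\alpha p+1)/2}$ that characterizes $d$ (Remark~4.2), together with the bound $\varphi_1(\boldsymbol{z})\le\frac{\alpha p+1}{\alpha p-1}J(\boldsymbol{v}_0)$ that holds on the good side of the well, yields a uniform gap $2\varphi_1(\boldsymbol{z})-(\alpha p+1)\varphi_2(\boldsymbol{z})\ge\delta\,\varphi_1(\boldsymbol{z})$ with $\delta=\delta\bigl(m,p,J(\boldsymbol{v}_0)/d\bigr)>0$. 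Inserting this into the second identity and using $\varphi_1(\boldsymbol{z})\ge c\|\boldsymbol{z}\|_{\boldsymbol{V}}^{2}$ (Poincar\'e and trace inequalities, in both cases $(a,b)\in\{(1,0),(0,1)\}$) together with $V\hookrightarrow L^{(1+m)/m}(\Omega)$ (valid since $m>1/5$) and $V_\Gamma\hookrightarrow L^{(1+m)/m}(\Gamma)$ (always valid), so that $I(t)\le C\|\boldsymbol{v}(t)\|_{\boldsymbol{V}}^{(1+m)/m}$, I arrive at $I'(t)\le -c\,I(t)^{2m/(1+m)}$ with $2m/(1+m)<1$. Integrating gives $I(t)^{(1-m)/(1+m)}\le I(0)^{(1-m)/(1+m)}-c't$, hence $v(t)=0$ and $v_\Gamma(t)=0$ for $t\ge T_{\rm ext}:=I(0)^{(1-m)/(1+m)}/c'$, a time depending only on $|v_0|_{L^{(1+m)/m}(\Omega)}$ and $|v_{\Gamma,0}|_{L^{(1+m)/m}(\Gamma)}$, and raising to the power $m/(1+m)$ produces the rate \eqref{upper}. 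Finally, the same bound $\varphi_1(\boldsymbol{v}(t))\le\frac{\alpha p+1}{\alpha p-1}J(\boldsymbol{v}_0)$ keeps $\|\boldsymbol{v}(t)\|_{\boldsymbol{V}}$ bounded, which, with the $L^\infty$-bound coming from the structure of the equations, precludes blow-up and allows continuation, so $T_{\max}=\infty$.

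The main obstacle I expect is the invariance step and its quantitative sharpening: one needs $\boldsymbol{v}$ continuous enough in $\boldsymbol{V}$ for $\varphi_1$ and $\varphi_2$ to vary continuously in time (this is precisely the regularity emphasised in Section~3, for which surface diffusion is essential), a careful treatment of the instant where $\boldsymbol{v}$ might first reach $\boldsymbol{0}$, and the passage from the strict inequality defining ${\mathcal W}$ to the uniform constant $\delta$. The constraint $p<5m$ enters exactly here, to place $L^{\alpha p+1}(\Omega)$ inside $V$ and thereby make $\varphi_2$ and the constant $C^{*}$ meaningful.
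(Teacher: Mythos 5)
Your overall strategy is the same as the paper's: the identity $\frac{d}{dt}Y(\boldsymbol{v})+2\varphi_1(\boldsymbol{v})=(\alpha p+1)\varphi_2(\boldsymbol{v})$, the embedding $V\hookrightarrow L^{\alpha p+1}(\Omega)$ made available by $\alpha p+1<6$ (i.e.\ $p<5m$), the resulting inequality $\varphi_2\le C\varphi_1^{(\alpha p+1)/2}$, invariance of ${\mathcal W}$ to produce a uniform gap $(1-\varepsilon_1)2\varphi_1\ge(\alpha p+1)\varphi_2$, and the ODE $Y'\le -C\,Y^{2/(\alpha+1)}$ closed by $m>1/5$. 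Your quantification of the gap via $\varphi_1\le\frac{\alpha p+1}{\alpha p-1}J(\boldsymbol{v}_0)$ and the explicit formula for $d$ is a correct rendering of what the paper does geometrically in Remark~4.2 and {\sc Figure}~1. However, there are two genuine gaps.

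The first and more serious one is the invariance step. You prove $2\varphi_1(\boldsymbol{v}(t))>(\alpha p+1)\varphi_2(\boldsymbol{v}(t))$ by a ``first exit time'' argument, which requires $t\mapsto\varphi_1(\boldsymbol{v}(t))$ to be continuous, i.e.\ $\boldsymbol{v}\in C([0,T_{\max});\boldsymbol{V})$. That regularity is \emph{not} provided by Proposition~4.1: the solution is only in $L^\infty(0,T;\boldsymbol{V})\cap C([0,T];L^{\alpha+1})$, and the degeneracy of $\gamma$ prevents upgrading this (one has $v^{(\alpha+1)/2},\,v^\alpha\in H^1(0,T;H)$ but not $v\in H^1(0,T;H)$, since $\partial_t v=\alpha^{-1}v^{1-\alpha}\partial_t v^\alpha$ degenerates where $v=0$). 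The paper states this obstruction explicitly at the start of Subsection~4.2 and circumvents it differently: it regularizes the equation by adding $\varepsilon\partial_t v_\varepsilon$ (and $\varepsilon\partial_t v_{\Gamma,\varepsilon}$), for which $\boldsymbol{v}_\varepsilon\in C([0,T];\boldsymbol{V})$ and the energy \emph{equality} \eqref{inv2} hold, proves invariance for $\boldsymbol{v}_\varepsilon$, and then passes to the limit using only the convergence $\varphi_2(\boldsymbol{v}_{\varepsilon_k}(t))\to\varphi_2(\boldsymbol{v}(t))$ (available from strong $C([0,T];L^r)$ convergence, where $\alpha p+1<6$ is used again) together with the explicit value \eqref{dd} of $d$ and the margin $\delta_0=d-J(\boldsymbol{v}_0)>0$ to recover the strict inequality for the limit $\boldsymbol{v}(t)$ without ever needing $\varphi_1(\boldsymbol{v}(\cdot))$ to be continuous. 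You flag this as ``the main obstacle'' but then assert the needed $\boldsymbol{V}$-continuity follows from the Section~3 regularity; it does not, so as written the step fails.

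The second gap is the claim $T_{\max}=\infty$. A bound on $\|\boldsymbol{v}(t)\|_{\boldsymbol{V}}$ does not control $\|v(t)\|_{L^\infty(\Omega)}$ in three dimensions, and the local continuation argument (the cut-off construction in Proposition~4.1) is driven precisely by the $L^\infty$-norm of the data. The required time-global $L^\infty$ estimate is not ``structural'': it comes from a Moser--Alikakos iteration, cited in the paper as \cite[Lemma~2.5]{FF90} (see Remark~4.1), and this is a second place where the hypothesis $1<p<5m$ enters. Your phrase ``the $L^\infty$-bound coming from the structure of the equations'' papers over this.
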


As a remark, the well-posedness of the problem is discussed in Proposition~4.1. 

\section{Global existence for globally Lipschitz perturbations}

In this section, we discuss the auxiliary problem for \eqref{f1}--\eqref{f5}. 
Let us replace the perturbations $g$, $g_\Gamma$ by 
globally {L}ipschitz continuous monotone functions $f, f_\Gamma$ with {L}ipschitz constants $L_f, L_{f_\Gamma}>0$.  
Furthermore, $f$ and $f_\Gamma$ satisfy $f(0)=f_\Gamma(0)=0$:  
Find $v: Q \to [0,\infty) $, $v_\Gamma: \Sigma \to [0,\infty)$ satisfying the system
\begin{align}
	\partial _t \gamma(v) -\Delta v +a v =f \bigl( \gamma (v) \bigr)  & \quad \mbox{a.e.\ in } Q,
	\label{af1}\\
	v_{|_\Gamma}=v_\Gamma & \quad \mbox{a.e.\ on } \Sigma, 
	\label{af2}\\
	\partial _t \gamma(v_\Gamma) + \partial _{\boldsymbol{\nu}} v 
	- \Delta_\Gamma v_\Gamma +bv_\Gamma =f_\Gamma \bigl( \gamma (v_\Gamma) \bigr)  
	& \quad \mbox{a.e.\ on } \Sigma,
	\label{af3}\\
	v(0)=v_0
	& \quad \mbox{a.e.\ in } \Omega, 
	\label{af4}\\
	v_\Gamma(0)=v_{\Gamma,0}
	& \quad \mbox{a.e.\ on }\Gamma.
	\label{af5}
\end{align}
In this section, we set $(a,b) \in \{(1,0), (0,1)\}$, and 
the function $\gamma$ is the same as it was in the previous section. 
\smallskip

We obtain the global existence and uniqueness result for globally Lipschitz perturbations as follows.

\begin{proposition}
\label{global}
Let $0<m \le 1$, $0<T<\infty$. 
Let us assume that 
$\boldsymbol{v}_0:=(v_0, v_{\Gamma,0}) \in \boldsymbol{V} \cap \boldsymbol{L}^\infty$ with 
$v_0 \ge 0$ and $v_{\Gamma,0} \ge 0$. Then, there exists a unique pair of non-negative functions 
$\boldsymbol{v}:=(v,v_\Gamma)$ such that   
\begin{align*}
	& v \in C \bigl( [0,T]; H \bigr) \cap L^\infty \bigl( 0,T;V \cap L^\infty (\Omega)\bigr) \cap L^2(0,T;W), \\
	& v^{\tiny (\alpha+1)/2} =v^{(1+m)/2m}\in H^1(0,T;H), \\
	& \gamma(v) \in H^1(0,T;H) \cap L^\infty ( 0,T;V ),\\
	& v_\Gamma \in C \bigl( [0,T]; H_\Gamma \bigr) 
	\cap L^\infty \bigl( 0,T;V_\Gamma \cap L^\infty(\Gamma) \bigr) 
	\cap L^2(0,T;W_\Gamma), \\
	& v_\Gamma^{(\alpha+1)/2} =v_\Gamma ^{(1+m)/2m}\in H^1(0,T;H_\Gamma), \\
	& \gamma(v_\Gamma) \in H^1(0,T;H_\Gamma) \cap  L^\infty ( 0,T;V_\Gamma ) 
\end{align*}
and \eqref{af1}--\eqref{af5} hold. 
Moreover, they satisfy the energy inequality
\begin{align}
	& \frac{4\alpha}{(\alpha+1)^2} \int_0^t 
	\left( \int_\Omega \bigl| \partial_t \bigl(v^{(\alpha+1)/2}(s) \bigr)\bigr|^2 dx 
	+  \int_\Gamma \bigl| \partial_t \bigl( v_\Gamma^{(\alpha+1)/2}(s) \bigr) \bigr|^2 d\Gamma 
	\right) ds 
	\nonumber \\
	 & \quad {}+ \varphi_1\bigl(\boldsymbol{v}(t) \bigr) 
	 - \int _\Omega \widehat{f_\gamma } \bigl( v(t) \bigr) dx
	 - \int _\Gamma \widehat{f}_{\Gamma,\gamma} \bigl( v_\Gamma(t)  \bigr) d\Gamma
	 \nonumber \\
	& \le \varphi_1 (\boldsymbol{v}_0 ) 
	- \int _\Omega \widehat{f}_\gamma (v_0) dx
	- \int _\Gamma \widehat{f}_{\Gamma,\gamma} (v_{\Gamma,0}) d\Gamma
	\label{aenergy}
\end{align}
and the $L^\infty$-boundednesses
\begin{align}
	\bigl| v(t) \bigr|_{L^\infty(\Omega)} 
	\le 
	e^{L t/\alpha}
	\bigl( | v_0 |_{L^\infty(\Omega)} + | v_{\Gamma,0} |_{L^\infty(\Gamma)}\bigr), 
	\label{alinfty1}\\
	\bigl| v_\Gamma(t) \bigr|_{L^\infty(\Gamma)} \le 
	e^{L t/\alpha}
	\bigl( | v_0 |_{L^\infty(\Omega)} + | v_{\Gamma,0} |_{L^\infty(\Gamma)}\bigr)
	\label{alinfty2}
\end{align}
for all $t \in [0,T]$, where $L:=\max\{L_f, L_{f_\Gamma}\}$. Furthermore, there exists a positive constant $M_0$ such that 
\begin{equation}
	\bigl| v(t)-v(s) \bigr|_H + \bigl| v_\Gamma (t)-v_\Gamma (s) \bigr|_H \le M_0 |t-s|^{1/(\alpha+1)}
	\label{aholder}
\end{equation}
for all $s, t \in [0,T]$. 
\end{proposition}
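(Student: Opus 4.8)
\emph{Proof strategy.} The plan is to adapt the time-discretization scheme of {F}ilo \cite{Filo87} to the generalized {W}entzell setting. Fix $N\in\mathbb{N}$, set $h:=T/N$, $t_n:=nh$, $\boldsymbol{v}^0:=\boldsymbol{v}_0$, and for $n=1,\dots,N$, given $\boldsymbol{v}^{n-1}\in\boldsymbol{V}\cap\boldsymbol{L}^\infty$ with non-negative components, solve the implicit {E}uler problem: find $\boldsymbol{v}^n=(v^n,v_\Gamma^n)\in\boldsymbol{W}$ with $v^n_{|_\Gamma}=v_\Gamma^n$ satisfying
\begin{gather*}
	\frac{1}{h}\bigl(\gamma(v^n)-\gamma(v^{n-1})\bigr)-\Delta v^n+av^n=f\bigl(\gamma(v^{n-1})\bigr)\quad\mbox{in }\Omega,\\
	\frac{1}{h}\bigl(\gamma(v_\Gamma^n)-\gamma(v_\Gamma^{n-1})\bigr)+\partial_{\boldsymbol{\nu}}v^n-\Delta_\Gamma v_\Gamma^n+bv_\Gamma^n=f_\Gamma\bigl(\gamma(v_\Gamma^{n-1})\bigr)\quad\mbox{on }\Gamma.
\end{gather*}
This stationary problem is solved by the maximal monotone operator theory: after a truncation of $\gamma$ it reduces to a Lipschitz monotone perturbation of the linear operator $\boldsymbol{A}$ on $\boldsymbol{H}$ associated with the bilinear form of $2\varphi_1$, which is coercive on $\boldsymbol{V}$ precisely because exactly one of $a,b$ equals $1$ (via the trace inequality); the surjectivity theorem then yields $\boldsymbol{v}^n$, strict monotonicity gives uniqueness, comparison with the trivial subsolution (testing with the negative part, using $f(0)=f_\Gamma(0)=0$ and the induction hypothesis $\boldsymbol{v}^{n-1}\ge\boldsymbol{0}$) gives $v^n,v_\Gamma^n\ge0$, elliptic regularity for the {W}entzell operator gives $\boldsymbol{v}^n\in\boldsymbol{W}$, and the truncation of $\gamma$ becomes inactive thanks to the $L^\infty$ bound below.

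\emph{Uniform estimates.} First, the discrete analogue of \eqref{alinfty1}--\eqref{alinfty2}: testing the $n$-th step with $\bigl((v^n-K_n)^+,(v_\Gamma^n-K_n)^+\bigr)\in\boldsymbol{V}$, where $K_n:=(1+Lh)^{n/\alpha}\bigl(|v_0|_{L^\infty(\Omega)}+|v_{\Gamma,0}|_{L^\infty(\Gamma)}\bigr)$, all terms on the left are non-negative on the relevant superlevel set while the right-hand side is dominated (using $f(s)\le Ls$, $f_\Gamma(s)\le Ls$ for $s\ge0$), which forces $v^n\le K_n$ and $v_\Gamma^n\le K_n$; since $(1+Lh)^{n/\alpha}\le e^{Lt_n/\alpha}$ this passes to the limit and gives a uniform $L^\infty$ bound $R$. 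Next, testing the $n$-th step with $\boldsymbol{v}^n-\boldsymbol{v}^{n-1}$ and summing: the elliptic part telescopes, by convexity of the quadratic $\varphi_1$, to $\varphi_1(\boldsymbol{v}^n)-\varphi_1(\boldsymbol{v}_0)$; the time part is bounded below, via the {C}auchy--{S}chwarz inequality $\bigl(\gamma(r)-\gamma(s)\bigr)(r-s)\ge\frac{4\alpha}{(\alpha+1)^2}\bigl|r^{(\alpha+1)/2}-s^{(\alpha+1)/2}\bigr|^2$ for $r,s\ge0$, by $\tfrac{4\alpha}{(\alpha+1)^2}$ times the discrete dissipation; and the source term is telescoped from above by convexity of the primitives $\widehat f_\gamma(r):=\int_0^r f(\gamma(\sigma))\,d\sigma$ and $\widehat f_{\Gamma,\gamma}(r):=\int_0^r f_\Gamma(\gamma(\sigma))\,d\sigma$ (each non-decreasing, hence convex), since $f(\gamma(v^{k-1}))(v^k-v^{k-1})\le\widehat f_\gamma(v^k)-\widehat f_\gamma(v^{k-1})$ pointwise. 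This produces \eqref{aenergy} (as an inequality); moreover $0\le\widehat f_\gamma(v^n)\le\frac{L}{\alpha+1}R^{\alpha+1}$, so together with $\varphi_1\ge0$ the dissipation is bounded uniformly in $h$, which gives the uniform bounds for $v^{(\alpha+1)/2}$, $v_\Gamma^{(\alpha+1)/2}$ in $H^1(0,T;\cdot)$ and for $\boldsymbol{v}$ in $L^\infty(0,T;\boldsymbol{V})$. Crucially, the $L^\infty$ bound then upgrades the dissipation bound to $\gamma(v)\in H^1(0,T;H)$, $\gamma(v_\Gamma)\in H^1(0,T;H_\Gamma)$ through the elementary inequality $\bigl|\gamma(r)-\gamma(s)\bigr|\le C_R\bigl|r^{(\alpha+1)/2}-s^{(\alpha+1)/2}\bigr|$ for $r,s\in[0,R]$, while $\nabla\gamma(v)=\alpha v^{\alpha-1}\nabla v$ with the $V$-bound gives $\gamma(v),\gamma(v_\Gamma)\in L^\infty$ in $V,V_\Gamma$; this is precisely where the hypothesis $\boldsymbol{v}_0\in\boldsymbol{V}\cap\boldsymbol{L}^\infty$ enters. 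Finally, $-\Delta v^n=f(\gamma(v^{n-1}))-h^{-1}(\gamma(v^n)-\gamma(v^{n-1}))-av^n$ has right-hand side uniformly bounded in $L^2(0,T;H)$, and bootstrapping with elliptic regularity for the generalized {W}entzell boundary value problem — where the surface diffusion $-\Delta_\Gamma$ is indispensable — produces the uniform $L^2(0,T;\boldsymbol{W})$ bound.

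\emph{Passage to the limit and uniqueness.} With the piecewise-constant and piecewise-linear interpolants of $\{\boldsymbol{v}^n\}$, $\{\gamma(\boldsymbol{v}^n)\}$, $\{(v^n)^{(\alpha+1)/2}\}$, $\{(v_\Gamma^n)^{(\alpha+1)/2}\}$, the above estimates and the compact embedding $\boldsymbol{V}\hookrightarrow\hookrightarrow\boldsymbol{H}$ give, by {A}ubin--{L}ions--{S}imon, strong convergence of the $\gamma(\boldsymbol{v})$-interpolants in $L^2(0,T;\boldsymbol{H})\cap C([0,T];\boldsymbol{H})$; continuity of $\beta=\gamma^{-1}$ together with the $L^\infty$ bound then force the $\boldsymbol{v}$-interpolants to converge to some non-negative $\boldsymbol{v}$ strongly in $L^2(0,T;\boldsymbol{H})$ and a.e., whence the $\gamma(\cdot)$- and $(\cdot)^{(\alpha+1)/2}$-limits are identified, and $f(\gamma(v))$, $f_\Gamma(\gamma(v_\Gamma))$ are recovered from Lipschitz continuity. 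Passing to the limit and invoking the $L^2(0,T;\boldsymbol{W})$ bound, $\boldsymbol{v}$ satisfies \eqref{af1}--\eqref{af5} in the almost-everywhere sense, the initial data coming from the uniform convergence of the $\gamma(\boldsymbol{v})$-interpolant. The {H}\"older bound \eqref{aholder} — hence $v,v_\Gamma\in C([0,T];\cdot)$ — follows from $v^{(\alpha+1)/2},v_\Gamma^{(\alpha+1)/2}\in H^1(0,T;\cdot)$ via the {H}\"older continuity of $r\mapsto r^{2/(\alpha+1)}$ and {J}ensen's inequality. For uniqueness I would subtract the equations for two solutions, test (bulk coupled with boundary) with a non-decreasing smooth approximation of ${\rm sgn}^+\bigl(\gamma(v_1)-\gamma(v_2)\bigr)$ so that every diffusion term contributes non-negatively ({K}ato-type inequality, the normal-derivative terms cancelling through the coupling), and reach an $L^1$-type contraction in which $L_f,L_{f_\Gamma}$ enter through {G}ronwall's lemma.

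\emph{Main obstacle.} Two points are delicate. The first is the \emph{uniform} control of the scheme: the weight $\gamma'(v)$ in the time-derivative term degenerates where $v$ vanishes, so the source cannot be absorbed into the dissipation by {Y}oung's inequality, and one must instead exploit the convexity of the primitives $\widehat f_\gamma,\widehat f_{\Gamma,\gamma}$ to make the energy identity telescope — after which the $L^\infty$ bound is precisely the ingredient that turns dissipation control into genuine $H^1$-in-time regularity of $\gamma(v),\gamma(v_\Gamma)$, a feature specific to the fast diffusion equation. The second — the novelty relative to \cite{Filo87} — is the bootstrap elliptic regularity for the generalized {W}entzell problem, which upgrades the weak (very weak) formulation to the pointwise formulation \eqref{af1}, \eqref{af3}; without the surface diffusion term $-\Delta_\Gamma$ this step is unavailable.
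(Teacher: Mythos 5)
Your proposal is correct and follows the same Filo-type architecture as the paper's proof: implicit time discretization solved by maximal-monotone surjectivity for the Wentzell operator, the energy estimate obtained by testing with the discrete increment and telescoping the primitives $\widehat{f}_\gamma$, $\widehat{f}_{\Gamma,\gamma}$ together with the inequality $\frac{4\alpha}{(\alpha+1)^2}\bigl(r^{(\alpha+1)/2}-s^{(\alpha+1)/2}\bigr)^2\le(r^\alpha-s^\alpha)(r-s)$, the upgrade of the dissipation bound to $H^1$-in-time regularity of $\gamma(v),\gamma(v_\Gamma)$ through the $L^\infty$ bound, Aubin--Lions--Simon compactness, the elliptic bootstrap through $H^{3/2}(\Omega)$ that exploits the surface diffusion, the H\"older bound from $|r-s|\le|r^{(\alpha+1)/2}-s^{(\alpha+1)/2}|^{2/(\alpha+1)}$, and the $L^1$-contraction uniqueness with smoothed signum test functions. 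The one genuinely different ingredient is the discrete $L^\infty$ bound: the paper (Lemma~3.1, after Filo's Lemma~1.14) tests with $v_i^\kappa$, derives a recursive $L^{\alpha+\kappa}$ estimate, and lets $\kappa\to\infty$, whereas you compare with the spatially constant barrier $K_n:=(1+Lh)^{n/\alpha}\bigl(|v_0|_{L^\infty(\Omega)}+|v_{\Gamma,0}|_{L^\infty(\Gamma)}\bigr)$ by testing with $\bigl((v^n-K_n)^+,(v_\Gamma^n-K_n)^+\bigr)$. Your variant does work: since $\gamma(K_n)=(1+Lh)\gamma(K_{n-1})$ and, by induction, $f(\gamma(v^{n-1}))\le L\gamma(K_{n-1})$, the time-difference term strictly dominates the source on the superlevel set while every other term is non-negative, which forces that set to be null; this is arguably more elementary than the $\kappa\to\infty$ passage. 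The remaining deviations are cosmetic: you run the elliptic bootstrap at the discrete level rather than after passing to the limit (both work, since $h^{-1}(\gamma(v^n)-\gamma(v^{n-1}))$ is controlled in the discrete $L^2(0,T;H)$ norm by the dissipation estimate \eqref{apthree}), and you truncate $\gamma$ in the stationary step where the paper works directly with the maximal monotone operator $\boldsymbol{A}$ on $L^{2\alpha}(\Omega)\times L^{2\alpha}(\Gamma)$ and identifies the limit via its demi-closedness.
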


We present the proof of the proposition in Subsection~3.3. 
In estimate \eqref{aenergy}, the function $\widehat{f}_\gamma$ is defined as the primitive of $f\circ \gamma $, namely 
\begin{equation*}
	\widehat{f}_\gamma (r) := \int_0^r (f \circ \gamma) (s) ds = \int_0^r f \bigl( \gamma (s) \bigr) ds \quad \mbox{for all } r \in \mathbb{R}.
\end{equation*}
The primitive $\widehat{f}_{\Gamma,\gamma}$ of $f_\Gamma \circ \gamma $ is also defined analogously.  
\smallskip

To discuss the existence of solutions to the above problem, we employ the argument of {F}ilo \cite{Filo87}. 
Indeed, the well-posedness for the nonlinear diffusion equation with the 
dynamic boundary condition without the perturbation, can be solved using idea from 
\cite{Fuk16, Fuk16b, FM18, SSZ16}. 
To this problem, see also the abstract approach of evolution equations governed by the difference between two subdifferentials 
\cite{Aka09, AO05, Ish77, KW76, Ota77}. 
Also we consider \cite{Gal12} and that author's series of papers on various problems with dynamic boundary conditions. 

\subsection{Time discretization}
The essential idea of {F}ilo \cite{Filo87} was to apply time discretization and suitable 
fundamental inequalities. Let $n \in \mathbb{N}$, and 
set $h:=T/n$: 
for each $i=1,2,\ldots, n$, find $v_{i}$ and $v_{\Gamma,i}$ satisfying 
\begin{align}
	\frac{\gamma(v_{i})-\gamma(v_{i-1})}{h} - \Delta v_{i} +a v_{i} =f_{i-1} 
	& \quad \mbox{a.e.\ in } \Omega,
	\label{ap1}\\
	(v_{i}){|_\Gamma}=v_{\Gamma ,i} 
	& \quad \mbox{a.e.\ on } \Gamma, 
	\label{ap2}\\
	\frac{\gamma(v_{\Gamma,i})-\gamma(v_{\Gamma,i-1})}{h} + \partial _{\boldsymbol{\nu}} v_{i} 
	- \Delta_\Gamma v_{\Gamma, i} + b v_{\Gamma, i}= f _{\Gamma,i-1}
	& \quad \mbox{a.e.\ on } \Gamma,
	\label{ap3}
\end{align}
where $ f_{i-1}:=f( \gamma(v_{i-1}) )$ and $f_{\Gamma, i-1}:=f_\Gamma ( \gamma(v_{\Gamma,i-1}) )$. 
Then, we see that 
there exists a unique pair $(v_{i}, v_{\Gamma,i}) \in \boldsymbol{W}$ of 
non-negative functions 
such that \eqref{ap1}--\eqref{ap3} holds for all $i=1,2,\ldots, n$. 
Indeed, we define an operator $\boldsymbol{A}:D(\boldsymbol{A}) \to \boldsymbol{H}$ by 
$\boldsymbol{A}\boldsymbol{z}:=(\gamma(z),\gamma(z_\Gamma))$ with 
$D(\boldsymbol{A})=L^{2\alpha}(\Omega) \times L^{2\alpha}(\Gamma)$. 
Then, $\boldsymbol{A}^{-1}$ is monotone and hemi-continuous. Therefore, 
$\boldsymbol{A}$ is maximal monotone same as $\boldsymbol{A}^{-1}$ (see, e.g., \cite[p.36, Theorem~2.4, p.29, Proposition~2.1]{Bar10}). 
Moreover, we 
define a proper, lower semi-continuous, and convex functional 
$\varphi_{\boldsymbol{H}} : \boldsymbol{H} \to [0,\infty]$ by 
\begin{equation*}
	\varphi_{\boldsymbol{H}}(\boldsymbol{z}):=
	\begin{cases}
	\varphi_1(\boldsymbol{z}) & \mbox{if } \boldsymbol{z} \in \boldsymbol{V}, \\
	\infty & \mbox{if } \boldsymbol{z} \in \boldsymbol{H} \setminus \boldsymbol{V}.
	\end{cases}
\end{equation*}
Then, we see that the subdifferential $\partial \varphi_{\boldsymbol{H}}$ is a maximal monotone operator, 
characterized by 
$\partial \varphi_{\boldsymbol{H}} (\boldsymbol{z})=(-\Delta z + az, \partial_{\boldsymbol{\nu}}z-\Delta_\Gamma z_\Gamma+b z_\Gamma)$ with 
domain $D(\partial \varphi_{\boldsymbol{H}})=\boldsymbol{W}$ (see, e.g., \cite{Bar10, Bre73, CF15}). 
Thanks to the standard maximal monotone theory (see, e.g., \cite[p.44, Theorem~2.7]{Bar10}), 
$A + \partial \varphi_{\boldsymbol{H}}$ is also maximal monotone. 
Moreover, there exists a positive constant $C_{\rm C}>0$ such that
\begin{align}
	\bigl( \boldsymbol{A} \boldsymbol{z} + \partial \varphi_{\boldsymbol{H}}(\boldsymbol{z}), \boldsymbol{z} \bigr)_{\boldsymbol{H}}
	& \ge \int_\Omega |\nabla z|^2 dx 
	+ a  \int_\Omega |z|^2 dx 
	+ \int_\Gamma |\nabla_\Gamma z_\Gamma|^2 d\Gamma 
	+ b \int_\Gamma |z_\Gamma|^2 d\Gamma 
	\nonumber \\
	& \Bigl( \ =2 \varphi_1(\boldsymbol{z}) \ \Bigr) 
	\nonumber \\
	& \ge C_{\rm C} |\boldsymbol{z}|_{\boldsymbol{V}}^2
	\label{coercive}
\end{align}
for all $\boldsymbol{z} \in \boldsymbol{W}$.
Indeed, 
if $(a,b)=(0,1)$, the {P}oincar\'e inequality ensures that  
there exists a positive constant $C_{\rm P}>0$ such that 
\begin{equation*}
	|z|_{V}^2 \le C_{\rm P} \left( \int_\Omega |\nabla z|^2 dx 
	+ b \int_\Gamma |z_{|_\Gamma}|^2 d\Gamma  \right)
	\quad \mbox{for all } z \in V.
	\label{poin}
\end{equation*}
If $(a,b)=(1,0)$, the trace theory between $V$ and $H_\Gamma$ ensures that 
there exists a positive constant $C_{\rm T}>0$ such that 
\begin{equation*}
	|z_{|_\Gamma} |_{H_\Gamma}^2 \le C_{\rm T} \left( \int_\Omega |\nabla z|^2 dx 
	+ a \int_\Omega |z|^2 dx \right) \quad \mbox{for all } z \in V.
	\label{trace}
\end{equation*}
Therefore, $\boldsymbol{A} + \partial \varphi_{\boldsymbol{H}}$ is coercive. 
Thus, 
the range $R(\boldsymbol{A}+\partial \varphi_{\boldsymbol{H}})$ of $\boldsymbol{A}+\partial \varphi_{\boldsymbol{H}}$ is 
the whole space $\boldsymbol{H}$ (see, e.g., \cite[p.36, Corollary 2.2]{Bar10}). 
Next, multiplying \eqref{ap1} by $\min\{0, v_{i} \} \in V$ and  
\eqref{ap3} by $\min\{0, v_{\Gamma, i} \} \in V _\Gamma$, respectively, 
we obtain the non-negativity of the functions $v_{i}$ and $v_{\Gamma,i}$. 

\subsection{Uniform estimates} 
According to \cite[Lemma 1.14]{Filo87}, we obtain the $L^\infty$-boundedness as follows:

\begin{lemma} 
\label{linf}
The functions $v_{i}$ and $v_{\Gamma,i}$ satisfy 
\begin{align*}
	|v_{i}|_{L^\infty(\Omega)}
	\le (1+L h)^{i/\alpha} \bigl( |v_0|_{L^\infty(\Omega)} + |v_{\Gamma,0}|_{L^\infty(\Gamma)} \bigr),
	\\
	|v_{\Gamma,i}|_{L^\infty(\Gamma)} 
	\le (1+L h)^{i/\alpha} \bigl( |v_0|_{L^\infty(\Omega)} + |v_{\Gamma,0}|_{L^\infty(\Gamma)} \bigr). 
\end{align*}
for all $i=1,2,\ldots, n$. 
\end{lemma}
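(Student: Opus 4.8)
The plan is to argue by induction on $i$, treating the discrete elliptic equations \eqref{ap1}--\eqref{ap3} as a stationary problem to which a comparison/testing argument applies. Set $K_i := (1+Lh)^{i/\alpha}\bigl(|v_0|_{L^\infty(\Omega)}+|v_{\Gamma,0}|_{L^\infty(\Gamma)}\bigr)$ and $k_{i-1}:=|v_{i-1}|_{L^\infty(\Omega)}+|v_{\Gamma,i-1}|_{L^\infty(\Gamma)}$, so by the inductive hypothesis $k_{i-1}\le K_{i-1}$. Since $\gamma$ is monotone increasing, I want to show $v_i\le \gamma^{-1}\bigl((1+Lh)\gamma(k_{i-1})\bigr)$ and likewise for $v_{\Gamma,i}$, and then check that $\gamma^{-1}\bigl((1+Lh)\gamma(k_{i-1})\bigr)\le K_i$; the latter follows from $\gamma(r)=r^\alpha$ for $r\ge0$, which gives $\gamma^{-1}\bigl((1+Lh)\gamma(k_{i-1})\bigr)=(1+Lh)^{1/\alpha}k_{i-1}\le(1+Lh)^{1/\alpha}K_{i-1}=K_i$. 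So the arithmetic at the end is clean; the content is the testing step.

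The key step: denote by $C_i$ the desired common bound for $v_i$ and $v_{\Gamma,i}$, i.e. $C_i:=\gamma^{-1}\bigl((1+Lh)\gamma(k_{i-1})\bigr)$, and observe $\gamma(C_i)=(1+Lh)\gamma(k_{i-1})=\gamma(k_{i-1})+Lh\,\gamma(k_{i-1})$. Multiply \eqref{ap1} by the test function $(v_i-C_i)^+\in V$ (its trace is $(v_{\Gamma,i}-C_i)^+\in V_\Gamma$ by \eqref{ap2}), integrate over $\Omega$, and add the corresponding identity obtained by multiplying \eqref{ap3} by $(v_{\Gamma,i}-C_i)^+$ and integrating over $\Gamma$. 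The second-order terms combine, after integration by parts and using the boundary term $\partial_{\boldsymbol\nu}v_i$, into $\int_\Omega|\nabla(v_i-C_i)^+|^2\,dx+\int_\Gamma|\nabla_\Gamma(v_{\Gamma,i}-C_i)^+|^2\,d\Gamma\ge0$. The zeroth-order terms $a\int_\Omega v_i(v_i-C_i)^+\,dx+b\int_\Gamma v_{\Gamma,i}(v_{\Gamma,i}-C_i)^+\,d\Gamma$ are nonnegative since $v_i,v_{\Gamma,i}\ge0$. Hence
\begin{align}
	& \frac1h\int_\Omega\bigl(\gamma(v_i)-\gamma(v_{i-1})\bigr)(v_i-C_i)^+\,dx
	+\frac1h\int_\Gamma\bigl(\gamma(v_{\Gamma,i})-\gamma(v_{\Gamma,i-1})\bigr)(v_{\Gamma,i}-C_i)^+\,d\Gamma
	\nonumber\\
	& \quad \le \int_\Omega f_{i-1}\,(v_i-C_i)^+\,dx+\int_\Gamma f_{\Gamma,i-1}\,(v_{\Gamma,i}-C_i)^+\,d\Gamma.
	\label{testineq}
\end{align}
On the left, rewrite $\gamma(v_i)-\gamma(v_{i-1})=\bigl(\gamma(v_i)-\gamma(C_i)\bigr)+\bigl(\gamma(C_i)-\gamma(v_{i-1})\bigr)$; on the set $\{v_i>C_i\}$ the first bracket is $\ge0$ and is multiplied by the nonnegative $(v_i-C_i)^+$, and $\gamma(C_i)-\gamma(v_{i-1})\ge\gamma(C_i)-\gamma(k_{i-1})=Lh\,\gamma(k_{i-1})$ since $v_{i-1}\le k_{i-1}$. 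For the right-hand side use that $f$ is Lipschitz with $f(0)=0$ and monotone, so $0\le f(\gamma(v_{i-1}))\le L\,\gamma(v_{i-1})\le L\,\gamma(k_{i-1})$ pointwise a.e., and similarly for $f_\Gamma$. Feeding these bounds into \eqref{testineq} makes the "forcing" contributions cancel the $Lh\,\gamma(k_{i-1})$ terms, leaving
$$
	\int_\Omega\bigl(\gamma(v_i)-\gamma(C_i)\bigr)(v_i-C_i)^+\,dx
	+\int_\Gamma\bigl(\gamma(v_{\Gamma,i})-\gamma(C_i)\bigr)(v_{\Gamma,i}-C_i)^+\,d\Gamma\le0.
$$
Since each integrand is $\ge0$ by monotonicity of $\gamma$, both must vanish a.e., forcing $(v_i-C_i)^+=0$ and $(v_{\Gamma,i}-C_i)^+=0$, i.e. $v_i\le C_i$ a.e. in $\Omega$ and $v_{\Gamma,i}\le C_i$ a.e. on $\Gamma$. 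Combined with the arithmetic estimate $C_i\le K_i$ from the first paragraph, this closes the induction, with the base case $i=0$ being the hypothesis $v_0,v_{\Gamma,0}\in \boldsymbol L^\infty$.

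The main obstacle I anticipate is bookkeeping rather than conceptual: I must be careful that $(v_i-C_i)^+$ is a legitimate test function in $V$ with trace $(v_{\Gamma,i}-C_i)^+$ (this uses $(v_i,v_{\Gamma,i})\in\boldsymbol W\subset\boldsymbol V$ and the chain rule for $H^1$ truncations, plus \eqref{ap2}), and that the integration by parts producing the cancellation of the $\partial_{\boldsymbol\nu}v_i$ term is valid in the $W$-regularity class — exactly the point where the coupling of bulk and boundary equations is used. A secondary subtlety is that one should treat the two cases $(a,b)=(1,0)$ and $(0,1)$ uniformly here, which is harmless since we only used $a,b\ge0$ and $v_i,v_{\Gamma,i}\ge0$ to discard the zeroth-order terms; the Poincaré/trace coercivity of the previous subsection is not needed for this particular estimate. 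I would also remark that the constant $C_i$ is chosen precisely so that the Gronwall-type factor $(1+Lh)^{i/\alpha}$ emerges after iterating; an alternative is to state the per-step estimate $k_i\le(1+Lh)^{1/\alpha}k_{i-1}$ and then iterate, which makes the structure transparent and matches \cite[Lemma~1.14]{Filo87}.
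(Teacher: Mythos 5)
Your route is genuinely different from the paper's. The paper multiplies \eqref{ap1} by $v_i^\kappa$ and \eqref{ap3} by $v_{\Gamma,i}^\kappa$, derives via the Young inequality a recursive $L^{\alpha+\kappa}$ estimate in which the quantity $\int_\Omega v_i^{\alpha+\kappa}dx+\int_\Gamma v_{\Gamma,i}^{\alpha+\kappa}d\Gamma$ is contracted by the factor $(1+Lh)^{(\alpha+\kappa)/\alpha}$ at each step, and then lets $\kappa\to\infty$, following \cite[Lemma~1.14]{Filo87}. You instead run a Stampacchia-type comparison against the constant $C_i=(1+Lh)^{1/\alpha}k_{i-1}$, testing with $\bigl([v_i-C_i]^+,[v_{\Gamma,i}-C_i]^+\bigr)\in\boldsymbol V$. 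Your testing step is sound: a constant shift is admissible, the trace of the truncation is the truncation of the trace, the gradient and zeroth-order terms are discarded by sign, the splitting $\gamma(v_i)-\gamma(v_{i-1})=(\gamma(v_i)-\gamma(C_i))+(\gamma(C_i)-\gamma(v_{i-1}))$ together with $0\le f(\gamma(v_{i-1}))\le L\,\gamma(k_{i-1})$ produces the exact cancellation, and strict monotonicity of $\gamma$ on $[0,\infty)$ forces $v_i\le C_i$. Your method is more elementary (no Young inequality, no limit in $\kappa$), while the paper's $L^p$-iteration is the one that survives when no pointwise comparison constant is available.

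There is, however, one slip you must repair: you define $k_{i-1}$ as the \emph{sum} $|v_{i-1}|_{L^\infty(\Omega)}+|v_{\Gamma,i-1}|_{L^\infty(\Gamma)}$ and assert the per-step estimate $k_i\le(1+Lh)^{1/\alpha}k_{i-1}$. But your comparison only yields $|v_i|_{L^\infty(\Omega)}\le C_i$ and $|v_{\Gamma,i}|_{L^\infty(\Gamma)}\le C_i$ separately, hence for the sum only $k_i\le 2C_i=2(1+Lh)^{1/\alpha}k_{i-1}$; iterating accumulates a factor $2^i$ and the induction does not close. The cure is to carry out the induction on $\tilde k_{i}:=\max\bigl\{|v_{i}|_{L^\infty(\Omega)},\,|v_{\Gamma,i}|_{L^\infty(\Gamma)}\bigr\}$ instead: every place you invoke $k_{i-1}$ (namely $v_{i-1}\le k_{i-1}$ a.e.\ in $\Omega$, $v_{\Gamma,i-1}\le k_{i-1}$ a.e.\ on $\Gamma$, and the bound on $f_{i-1}$, $f_{\Gamma,i-1}$) only needs an upper bound for each component, so the maximum suffices, and then $\tilde k_i\le(1+Lh)^{1/\alpha}\tilde k_{i-1}$ genuinely iterates to $\tilde k_i\le(1+Lh)^{i/\alpha}\tilde k_0$, which gives the stated inequalities since $\tilde k_0\le|v_0|_{L^\infty(\Omega)}+|v_{\Gamma,0}|_{L^\infty(\Gamma)}$.
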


\begin{proof}
Let $\kappa >1$. 
Multiplying \eqref{ap1} by $v_{i}^\kappa $ and \eqref{ap3} by $v_{\Gamma, i}^\kappa$, 
using \eqref{ap2}, and 
summing the results, 
we deduce that
\begin{align*}
	& \int_\Omega v_{i}^{\alpha+\kappa} dx + \int_\Gamma v_{\Gamma, i}^{\alpha+\kappa} d\Gamma 
	+ \biggl( h \kappa \int_\Omega v_{i}^{\kappa-1}|\nabla v_{i}|^2  dx 
	+ h \kappa \int_\Gamma v_{\Gamma, i}^{\kappa-1}|\nabla _\Gamma v_{\Gamma, i}|^2 d\Gamma 
	\nonumber \\
	& \quad {}
	+ h a \int_\Omega v_{i}^{\kappa+1}  dx 
	+ h b \int_\Gamma v_{\Gamma, i}^{\kappa+1} d\Gamma \biggr)
	\nonumber \\
	& \le  \int_\Omega v_{i-1}^\alpha v_{i}^{\kappa}  dx 
	+  \int_\Gamma v_{\Gamma, i-1}^\alpha v_{\Gamma,i}^{\kappa} d\Gamma 
	+ h \int_\Omega f_{i-1} v_{i}^{\kappa}  dx 
	+ h \int_\Gamma f_{\Gamma,i-1} v_{\Gamma,i}^{\kappa} d\Gamma 
	\nonumber \\
	& \le (1+L_f h) \int_\Omega v_{i-1}^\alpha v_{i}^{\kappa}  dx 
	+  (1+L_{f_\Gamma}h) \int_\Gamma v_{\Gamma, i-1}^\alpha v_{\Gamma,i}^{\kappa} d\Gamma 
	\nonumber \\
	& \le  
	\frac{\kappa}{\alpha+\kappa} \int_\Omega  v_{i}^{\kappa \cdot \frac{\alpha+\kappa}{\kappa}}  dx 
	+\frac{\alpha}{\alpha+\kappa}
	(1+L h)^{(\alpha+\kappa)/\alpha} \int_\Omega v_{i-1}^{\alpha \cdot \frac{\alpha+\kappa}{\alpha}}  dx 
	\nonumber \\
	& \quad {}
	+ \frac{\kappa}{\alpha+\kappa} \int_\Gamma  v_{\Gamma, i}^{\kappa \cdot \frac{\alpha+\kappa}{\kappa}}  d\Gamma 
	+\frac{\alpha}{\alpha+\kappa}
	(1+L h)^{(\alpha+\kappa)/\alpha} \int_\Gamma v_{\Gamma,i-1}^{\alpha \cdot \frac{\alpha+\kappa}{\alpha}} d\Gamma 
\end{align*}
for all $i=1,2,\ldots, n$, 
where we used the Young inequality. 
Now, the second terms of the left hand side are non-negative. 
Therefore, we use the above estimate recurrently: 
\begin{equation*}
	\int_\Omega v_{i}^{\alpha+\kappa} dx + \int_\Gamma v_{\Gamma, i}^{\alpha+\kappa} d\Gamma 
	\le  
	(1+L h)^{i (\alpha+\kappa)/\alpha} 
	\left( \int_\Omega v_{0}^{\alpha+\kappa}  dx 
	+ \int_\Gamma v_{\Gamma,0}^{\alpha+\kappa} d\Gamma \right).
\end{equation*}
This implies that 
\begin{align*}
	|v_{i}|_{L^{\alpha+\kappa}(\Omega)} 
	\le (1+L h)^{i/\alpha} 
	\left(|v_{0}|_{L^{\alpha+\kappa}(\Omega)} + |v_{\Gamma,0}|_{L^{\alpha+\kappa}(\Gamma)}  \right),
	\\
	 |v_{\Gamma, i}|_{L^{\alpha+\kappa}(\Gamma)} 
	\le (1+L h)^{i/\alpha} 
	\left(|v_{0}|_{L^{\alpha+\kappa}(\Omega)} + |v_{\Gamma,0}|_{L^{\alpha+\kappa}(\Gamma)}  \right)
\end{align*}
for all $i=1,2,\ldots, n$. 
Thus, letting $\kappa \to \infty$ we find the conclusion. 
\end{proof}

Using Lemma \ref{linf}, we obtain the following estimates:
\begin{lemma}
\label{ests}
There exist positive constants $M_1$, $M_2$, and $M_3>0$ independent of $n \in \mathbb{N}$ such that
\begin{gather}
	\sum_{i=1}^{n} \left| \frac{v_{i}^{(\alpha+1)/2}-v_{i-1}^{(\alpha+1)/2}}{h}
	\right|_{H}^2 h 
	+\sum_{i=1}^{n} \left| \frac{v_{\Gamma,i}^{(\alpha+1)/2}-v_{\Gamma, i-1}^{(\alpha+1)/2}}{h}
	\right|_{H_\Gamma }^2 h 
	\le M_1, 
	\label{apone}	
	\\
	|v_{i}|_V+ |v_{\Gamma,i}|_{V_\Gamma} \le M_2 \quad \mbox{for all } i=1,2,\ldots,n,
	\label{aptwo}
	\\
	\sum_{i=1}^{n} \left| \frac{\gamma(v_{i})-\gamma(v_{i-1})}{h}
	\right|_{H}^2 h 
	+\sum_{i=}^{n} \left| \frac{\gamma(v_{\Gamma,i})-\gamma(v_{\Gamma, i-1})}{h}
	\right|_{H_\Gamma }^2 h 
	\le M_3
	\label{apthree}
\end{gather} 
for all $n \in \mathbb{N}$. 
\end{lemma}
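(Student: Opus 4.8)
The plan is to extract \eqref{apone} and \eqref{aptwo} from one discrete energy estimate for the scheme \eqref{ap1}--\eqref{ap3}, and then to deduce \eqref{apthree} from \eqref{apone} by a pointwise comparison that exploits the uniform $L^\infty$ bound of Lemma~\ref{linf}. Two elementary inequalities for $\gamma$ are at the heart of the argument. Writing $\psi(r):=r^{(\alpha+1)/2}$ for $r\ge0$, one computes $\gamma'=\frac{4\alpha}{(\alpha+1)^2}(\psi')^2$ on $(0,\infty)$, so the Cauchy--Schwarz inequality applied to $\psi(a)-\psi(b)=\int_b^a\psi'(r)\,dr$ gives
\begin{equation*}
	\bigl( \gamma(a)-\gamma(b) \bigr)(a-b) \ge \frac{4\alpha}{(\alpha+1)^2} \bigl( a^{(\alpha+1)/2}-b^{(\alpha+1)/2} \bigr)^2 \quad \mbox{for all } a,b \ge 0 .
\end{equation*}
Moreover $\gamma(r)=\psi(r)^{2\alpha/(\alpha+1)}$ with $2\alpha/(\alpha+1)\ge1$ (since $\alpha=1/m\ge1$), so $t\mapsto t^{2\alpha/(\alpha+1)}$ is convex and Lipschitz on bounded subintervals of $[0,\infty)$; hence, for every $R>0$,
\begin{equation*}
	\bigl| \gamma(a)-\gamma(b) \bigr| \le \frac{2\alpha}{\alpha+1} R^{(\alpha-1)/2} \bigl| a^{(\alpha+1)/2}-b^{(\alpha+1)/2} \bigr| \quad \mbox{whenever } 0 \le a,b \le R .
\end{equation*}

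For the energy estimate, fix $k\in\{1,\dots,n\}$, multiply \eqref{ap1} by $v_i-v_{i-1}\in V$ and integrate over $\Omega$, multiply \eqref{ap3} by $v_{\Gamma,i}-v_{\Gamma,i-1}\in V_\Gamma$ and integrate over $\Gamma$, and add. Since $(v_i,v_{\Gamma,i})\in\boldsymbol{W}$ and $(v_i)_{|_\Gamma}=v_{\Gamma,i}$ by \eqref{ap2}, Green's formula makes the two $\partial_{\boldsymbol\nu}v_i$-contributions cancel, leaving the diffusion terms $\int_\Omega\nabla v_i\cdot\nabla(v_i-v_{i-1})\,dx+\int_\Gamma\nabla_\Gamma v_{\Gamma,i}\cdot\nabla_\Gamma(v_{\Gamma,i}-v_{\Gamma,i-1})\,d\Gamma$. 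Applying the identity $x\cdot(x-y)=\frac12|x|^2-\frac12|y|^2+\frac12|x-y|^2$ to these and to the zeroth-order terms $a\,v_i(v_i-v_{i-1})$, $b\,v_{\Gamma,i}(v_{\Gamma,i}-v_{\Gamma,i-1})$, using the first pointwise inequality for the terms $\frac1h(\gamma(v_i)-\gamma(v_{i-1}))(v_i-v_{i-1})$ and $\frac1h(\gamma(v_{\Gamma,i})-\gamma(v_{\Gamma,i-1}))(v_{\Gamma,i}-v_{\Gamma,i-1})$, discarding the nonnegative squared-increment terms, and summing over $i=1,\dots,k$, one obtains
\begin{align*}
	& \frac{4\alpha}{(\alpha+1)^2} \sum_{i=1}^{k} \left( h \left| \frac{v_i^{(\alpha+1)/2}-v_{i-1}^{(\alpha+1)/2}}{h} \right|_H^2 + h \left| \frac{v_{\Gamma,i}^{(\alpha+1)/2}-v_{\Gamma,i-1}^{(\alpha+1)/2}}{h} \right|_{H_\Gamma}^2 \right) + \varphi_1(\boldsymbol{v}_k) \\
	& \quad \le \varphi_1(\boldsymbol{v}_0) + \sum_{i=1}^{k} \int_\Omega f_{i-1}(v_i-v_{i-1}) \, dx + \sum_{i=1}^{k} \int_\Gamma f_{\Gamma,i-1}(v_{\Gamma,i}-v_{\Gamma,i-1}) \, d\Gamma ,
\end{align*}
where $\boldsymbol{v}_k:=(v_k,v_{\Gamma,k})$. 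Since the derivative $f\circ\gamma$ of $\widehat{f}_\gamma$ is nondecreasing, $\widehat{f}_\gamma$ is convex, so $f_{i-1}(v_i-v_{i-1})\le\widehat{f}_\gamma(v_i)-\widehat{f}_\gamma(v_{i-1})$ a.e.\ in $\Omega$; the first sum therefore telescopes to $\int_\Omega\widehat{f}_\gamma(v_k)\,dx-\int_\Omega\widehat{f}_\gamma(v_0)\,dx$, and likewise for $\widehat{f}_{\Gamma,\gamma}$. Because $|f(\gamma(s))|\le L|s|^\alpha$ gives $|\widehat{f}_\gamma(r)|\le\frac{L}{\alpha+1}|r|^{\alpha+1}$, Lemma~\ref{linf} (note $(1+Lh)^{i/\alpha}\le e^{LT/\alpha}$ for $i\le n$) bounds these integrals by a constant independent of $n$ and $k$, while $\varphi_1(\boldsymbol{v}_0)<\infty$ because $\boldsymbol{v}_0\in\boldsymbol{V}\cap\boldsymbol{L}^\infty$. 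Taking $k=n$ yields \eqref{apone}; keeping $k$ arbitrary and using the coercivity $2\varphi_1(\boldsymbol{z})\ge C_{\rm C}|\boldsymbol{z}|_{\boldsymbol{V}}^2$ from \eqref{coercive} yields \eqref{aptwo}.

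Estimate \eqref{apthree} then requires no further energy argument: by Lemma~\ref{linf} the functions $v_i,v_{\Gamma,i}$ are bounded in $L^\infty$ by a constant $R$ independent of $n$, so the second pointwise inequality gives $|\gamma(v_i)-\gamma(v_{i-1})|^2\le(\frac{2\alpha}{\alpha+1})^2R^{\alpha-1}|v_i^{(\alpha+1)/2}-v_{i-1}^{(\alpha+1)/2}|^2$ a.e.\ in $\Omega$, and the analogous bound holds on $\Gamma$; integrating, dividing by $h$, and summing over $i$ reduces \eqref{apthree} to \eqref{apone}.

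The main obstacle is the energy estimate of the second step. One must test by the increments $v_i-v_{i-1}$ — testing instead by $\gamma(v_i)-\gamma(v_{i-1})$, which would bound $\gamma(v_i)-\gamma(v_{i-1})$ directly, leaves the term $\int_\Omega\nabla v_i\cdot\nabla(\gamma(v_i)-\gamma(v_{i-1}))\,dx$ without a telescoping structure when $0<m<1$ — and then one must check that every non-telescoping contribution is absorbed by the monotonicity and convexity of $\gamma$, $\widehat{f}_\gamma$, $\widehat{f}_{\Gamma,\gamma}$, and that all constants are independent of the step number $n$; the uniform $L^\infty$ bound of Lemma~\ref{linf} and the coercivity \eqref{coercive} are precisely what make the latter possible.
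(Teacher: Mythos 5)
Your proposal is correct and follows essentially the same route as the paper: testing \eqref{ap1} and \eqref{ap3} with the increments $v_i-v_{i-1}$, $v_{\Gamma,i}-v_{\Gamma,i-1}$, telescoping via the convexity of $\varphi_1$ and of $\widehat{f}_\gamma$, invoking the two pointwise inequalities for $\gamma$ together with the $L^\infty$ bound of Lemma~\ref{linf}, and then reducing \eqref{apthree} to \eqref{apone}. The only difference is presentational: you derive the two elementary inequalities directly (Cauchy--Schwarz on $\psi(a)-\psi(b)=\int_b^a\psi'$ and the local Lipschitz bound for $t\mapsto t^{2\alpha/(\alpha+1)}$), whereas the paper defers them to its Appendix.
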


\begin{proof}
Multiplying \eqref{ap1} by $v_{i}-v_{i-1}$ and \eqref{ap3} by $v_{\Gamma, i}-v_{\Gamma,i-1}$, 
using \eqref{ap2}, and 
summing these results,  
we deduce that
\begin{align*}
	& \int_\Omega  \frac{\gamma(v_{i})-\gamma(v_{i-1})}{h} (v_{i}-v_{i-1})dx 
	+ \int_\Gamma \frac{\gamma(v_{\Gamma,i})-\gamma(v_{\Gamma, i-1})}{h}(v_{\Gamma, i}-v_{\Gamma,i-1})d\Gamma
	\nonumber \\
	 & \quad {}+ \varphi_1 ( \boldsymbol{v}_{i}) -\varphi_1 ( \boldsymbol{v}_{i-1}) 
	 \nonumber \\
	& \le 
	\int _\Omega \widehat{f}_\gamma(v_{i})dx 
	-\int _\Omega \widehat{f}_\gamma(v_{i-1})dx
	+
	\int _\Gamma\widehat{f}_{\Gamma, \gamma} (v_{\Gamma, i}) d\Gamma 
	-
	\int _\Gamma\widehat{f}_{\Gamma, \gamma} (v_{\Gamma, i-1}) d\Gamma 
\end{align*}
for all $i=1,2,\ldots, n$. 
Now, recall the fundamental inequality 
\begin{equation*}
	\frac{4\alpha}{(\alpha+1)^2} 
	\bigl( r^{(\alpha+1)/2}-s^{(\alpha+1)/2} \bigr)^2 \le (r^\alpha-s^\alpha)(r-s)
\end{equation*}
for all $r,s \ge 0$ (see e.g., \cite[Proposition 2]{Filo88} and Appendix). 
We obtain
\begin{align}
	& 
	\frac{4\alpha}{(\alpha+1)^2}
	\left(
	\left| \frac{v_{i}^{(\alpha+1)/2}-v_{i-1}^{(\alpha+1)/2}}{h}
	\right|_{H}^2 h 
	+ \left| \frac{v_{\Gamma,i}^{(\alpha+1)/2}-v_{\Gamma, i-1}^{(\alpha+1)/2}}{h}
	\right|_{H_\Gamma }^2 h 
	\right)
	\nonumber \\
	 & \quad {}+ \varphi_1 ( \boldsymbol{v}_{i}) 
	 -
	  \int _\Omega \widehat{f} _ \gamma (v_{i})  dx
	  -
	   \int _\Gamma \widehat{f}_{\Gamma,\gamma} (v_{\Gamma,i})d\Gamma
	 \nonumber \\
	& \le 
	\varphi_1 ( \boldsymbol{v}_{i-1}) 
	-
	\int _\Omega \widehat{f}_\gamma (v_{i-1}) dx
	-
	\int _\Gamma \widehat{f}_{\Gamma,\gamma}  (v_{\Gamma,i-1})  d\Gamma
	\label{time2}
\end{align}
for all $i=1,2,\ldots, n$. 
Summing \eqref{time2} from $i=1$ to $i=j \le n$, we obtain 
\begin{align}
	& 
	\frac{4\alpha}{(\alpha+1)^2} 
	\left(
	\sum_{i=1}^{j} \left| \frac{v_{i}^{(\alpha+1)/2}-v_{i-1}^{(\alpha+1)/2}}{h}
	\right|_{H}^2 h 
	+ 
	\sum_{i=1}^{j}
	\left| \frac{v_{\Gamma,i}^{(\alpha+1)/2}-v_{\Gamma, i-1}^{(\alpha+1)/2}}{h}
	\right|_{H_\Gamma }^2 h 
	\right)
	\nonumber \\
	 & \quad {}+ \varphi_1 ( \boldsymbol{v}_{j}) 
	 -
	  \int _\Omega \widehat{f}_ \gamma (v_{j}) dx
	  -
	   \int _\Gamma \widehat{f}_{\Gamma, \gamma} (v_{\Gamma,j}) d\Gamma
	 \nonumber \\
	& \le 
	\varphi_1 ( \boldsymbol{v}_{0}) 
	-
	\int _\Omega \widehat{f}_\gamma (v_{0} ) dx
	-
	\int _\Gamma \widehat{f}_{\Gamma, \gamma} (v_{\Gamma,0}) d\Gamma.
	\label{enelast}
\end{align}
Here, using the {L}ipschitz continuities of $f$ and $f_\Gamma$, we have
\begin{equation*}
	\bigl| \widehat{f}_\gamma (r) \bigr| \le \int_0^r \bigl|f \bigl( \gamma(s) \bigr ) \bigr| ds \le L_f\int_0^r s^\alpha \, ds 
	= \frac{L}{\alpha+1}r ^{\alpha+1}, 
	\quad \bigl|\widehat{f}_\Gamma (r) \bigr| \le \frac{L}{\alpha+1}r ^{\alpha+1}
\end{equation*}
for all $r \ge 0$.
Therefore, applying Lemma \ref{linf}, we obtain that there exists a positive constant $\tilde{M}_1>0$ depending on 
$\alpha, T, |v_0|_{L^\infty(\Omega)}, |v_{\Gamma,0}|_{L^\infty(\Gamma)}$, and $L$, independent of $n$ such that
\begin{align}
	& \int _\Omega \widehat{f}_\gamma ( v_{j}) dx
	+
	\int _\Gamma \widehat{f}_{\Gamma, \gamma} (v_{\Gamma,j}) d\Gamma
	\nonumber \\
	& \le 
	 \frac{L}{\alpha+1} (1+Lh)^{(\alpha+1)j/\alpha}
	 \left( 
	 |v_0|_{L^\infty(\Omega)}
	 + |v_{\Gamma,0}|_{L^\infty(\Gamma)} \right)^{\alpha+1} \bigl( |\Omega| + |\Gamma| \bigr)
	\nonumber\\
	& \le \frac{L}{\alpha+1} e^{LT(\alpha+1)/\alpha}
	 \left( 
	 |v_0|_{L^\infty(\Omega)}
	 + |v_{\Gamma,0}|_{L^\infty(\Gamma)} \right)^{\alpha+1} \bigl( |\Omega| + |\Gamma| \bigr)=:\tilde{M}_1.
	\label{fff}
\end{align}
Thus, we deduce that 
\begin{align*}
	& 
	\sum_{i=1}^{n} \left| \frac{v_{i}^{(\alpha+1)/2}-v_{i-1}^{(\alpha+1)/2}}{h}
	\right|_{H}^2 h 
	+ 
	\sum_{i=1}^{n}
	\left| \frac{v_{\Gamma,i}^{(\alpha+1)/2}-v_{\Gamma, i-1}^{(\alpha+1)/2}}{h}
	\right|_{H_\Gamma }^2 h 
	 \nonumber \\
	& \le 
	\frac{(\alpha+1)^2}{4\alpha}
	\left( 
	\varphi_1 ( \boldsymbol{v}_{0}) 
	+ \tilde{M}_1
	  \right)=:M_1
\end{align*}
for all $n \in \mathbb{N}$;
that is,  
we obtain equation \eqref{apone}.  
Next, using equations \eqref{coercive}, \eqref{enelast}, and \eqref{fff},
we obtain \eqref{aptwo}. 
To obtain \eqref{apthree}, we apply the fundamental inequality 
\begin{equation}
	|r^\alpha-s^\alpha|
	\le 	
	\frac{2\alpha}{\alpha+1} 
	\max\{ r,s \}^{(\alpha-1)/2}
	\bigl| r^{(\alpha+1)/2}-s^{(\alpha+1)/2} \bigr|
	\label{last}
\end{equation}
for all $r,s \ge 0$ since $\alpha \ge 1$ (see, for example, \cite[Lemma 1.20]{Filo87}, 
a similar method in \cite[p.477]{Lio69}, and the Appendix). From \eqref{last} and Lemma~\ref{linf} we have 
\begin{align*}
	& \int_\Omega  \left| \frac{\gamma(v_{i})-\gamma(v_{i-1})}{h} \right| ^2 dx 
	\nonumber \\
	& \le \max \bigl\{ |v_i|_{L^\infty(\Omega)}, |v_{i-1}|_{L^\infty(\Omega)} \bigr\}^{\alpha-1}
	\left( \frac{2\alpha}{\alpha+1}\right)^2
	\int _\Omega 
	\left| \frac{v_{i}^{(\alpha+1)/2}-v_{i-1}^{(\alpha+1)/2}}{h}
	\right|^2 dx 
	\nonumber \\
	& \le 
	 e^{(\alpha-1)LT/\alpha} 
	\left(|v_{0}|_{L^\infty(\Omega)} + |v_{\Gamma,0}|_{L^\infty (\Gamma)}  \right)^{\alpha-1}
	\left( \frac{2\alpha}{\alpha+1}\right)^2
	\left| \frac{v_{i}^{(\alpha+1)/2}-v_{i-1}^{(\alpha+1)/2}}{h}
	\right|_H^2
\end{align*}
for all $i=1,2,\ldots, n$. 
Thus, we obtain the same estimate for $\gamma(v_{\Gamma,i})$, multiplying $h$, and 
summing up them. 
Moreover, summing these results from $i=1$ to $n$, then we apply \eqref{apone} to deduce \eqref{apthree}. 
\end{proof}

\begin{remark}
The last estimate \eqref{apthree} can be obtained only in the cases of the fast diffusion equation for $0<m<1$ or the 
heat equation for $m=1$. Indeed, the inequality \eqref{last}
is true for $\alpha \ge 1$. 
However, it is the advantage from the $L^\infty$-bounded initial data. 
For this reason, the time derivative
is treated in the dual space of $V$ or $V_\Gamma$ in the case of the porous media equation for $m >1$. 
In the case of the fast diffusion equation with {D}irichlet boundary condition and $H^1_0 \cap L^{\alpha+1}$-bounded initial data, 
Akagi--Kajikiya obtained crucial results in \cite{AK13}. 
\end{remark}

\subsection{Proof of Proposition 3.1}
In the standard manner, we define the following piecewise linear functions and step functions:
\begin{align*}
	\hat v_h(t)& :=v_{i-1} + \frac{v_{i}-v_{i-1}}{h}(t-ih) \quad & \mbox{for } t \in \bigl[ (i-1)h, ih \bigr], \\
	\bar v_h(t)& :=v_i \quad & \mbox{for } t \in \bigl( (i-1)h, ih \bigr], \\
	\hat v_h^*(t)& :=\gamma(v_{i-1}) + \frac{\gamma(v_{i})-\gamma(v_{i-1})}{h}(t-ih) \quad & \mbox{for } t \in \bigl[ (i-1)h, ih \bigr], \\
	\bar v_h^* (t)& :=\gamma(v_i) \quad & \mbox{for } t \in \bigl( (i-1)h, ih \bigr], \\
	\underline{v}_h^* (t)& :=\gamma(v_{i-1}) \quad & \mbox{for } t \in \bigl[ (i-1)h, ih \bigr), \\
	\hat v_h^{**}(t)& :=v_{i-1}^{(\alpha+1)/2} + \frac{v_{i}^{(\alpha+1)/2}-v_{i-1}^{(\alpha+1)/2}}{h}(t-ih) \quad & \mbox{for } t \in \bigl[ (i-1)h, ih \bigr], \\
	\bar v_h^{**}(t)& :=v_i^{(\alpha+1)/2} \quad & \mbox{for } t \in \bigl( (i-1)h, ih \bigr], 
\end{align*}
for $i=1,2,\ldots,n$, 
and analogously for $\hat v_{\Gamma,h}, \bar v_{\Gamma,h}, \hat v_{\Gamma,h}^*, \bar v_{\Gamma,h}^*, \underline{v}_{\Gamma,h}^*,\hat v_{\Gamma,h}^{**}, \bar v_{\Gamma,h}^{**}$. 
According \eqref{ap1}--\eqref{ap3}, these functions satisfy the following equations:
\begin{align}
	\partial_t \hat{v}_h^* - \Delta \bar{v}_h +a \bar{v}_h =f(\underline{v}^*_{h})
	& \quad \mbox{a.e.\ in } \Omega,
	\label{ap1b}\\
	(v_{h}){|_\Gamma}=v_{\Gamma ,h} 
	& \quad \mbox{a.e.\ on } \Gamma, 
	\label{ap2b}\\
	\partial_t \hat{v}_{\Gamma, h}^* + \partial _{\boldsymbol{\nu}} \bar v_{h} 
	- \Delta_\Gamma \bar v_{\Gamma, h} + b \bar v_{\Gamma, h}= f _{\Gamma}(\underline{v}_{\Gamma,h}^*)
	& \quad \mbox{a.e.\ on } \Gamma, 
	\label{ap3b}\\
	\hat{v}_h(0)=v_0 
	& \quad \mbox{a.e.\ in }\Omega, 
	\label{ap4b}\\
	\hat{v}_{\Gamma, h}(0)=v_{\Gamma,0} 
	& \quad \mbox{a.e.\ on }\Gamma. 
	\label{ap5b}
\end{align}
Here, we have the following useful properties:
\begin{align}
	& |\hat v_h^* |_{L^2(0,T;X)}^2 \le \frac{h}{2} \bigl| \gamma(v_0) \bigr|_X^2+|\bar v_h^* |_{L^2(0,T;X)}^2, 
	\label{tool1}
	\\
	& | \hat v_h^* |_{L^\infty(0,T;X)} = \max\bigl\{ \bigl| \gamma(v_0) \bigr|_X, |\bar v_h^* |_{L^\infty(0,T;X)} \bigr\}, 
	\label{tool2}
	\\
	& |\hat v_h^* - \bar v_h^* |_{L^2(0,T;X)}^2 =|\hat v_h^* - \underline{v}^*_{h} |_{L^2(0,T;X)}^2 = \frac{h^2}{3} |\partial _t \hat v_h^* |_{L^2(0,T;X)}^2,
	\label{tool3}
\end{align}
for some suitable function space $X$. 
\smallskip

Now, we prove Proposition 3.1.

\begin{proof} Thanks to \eqref{aptwo}, \eqref{apthree}, \eqref{tool1}, and \eqref{tool2},  
we obtain uniform estimates 
for $\bar{\boldsymbol{v}}_h:=(\bar{v}_h,\bar{v}_{\Gamma,h})$, 
$\hat{\boldsymbol{v}}^*_h:=(\hat{v}^*_h,\hat{v}_{\Gamma,h}^*)$, 
and 
$\bar{\boldsymbol{v}}^*_h:=(\bar{v}^*_h,\bar{v}_{\Gamma,h}^*)$ for example,
\begin{gather*}
	|\bar{v}_h|_{L^\infty(0,T;V)} \le M_2, \\
	|\partial_t \hat{v}^*_h|_{L^2(0,T;H)}^2 \le M_3, \\
		\bigl| \hat{v}_h(t) \bigr|_{L^\infty(\Omega)} 
	\le e^{L T/\alpha}\left( |v_0|_{L^\infty(\Omega)}+ |v_{\Gamma,0}|_{L^\infty(\Gamma)} \right)
	\quad {\rm for~all~} t \in [0,T],
	\\
		\bigl| \hat{v}_h^*(t) \bigr|_{L^\infty(\Omega)} 
\le e^{L T}\left( |v_0|_{L^\infty(\Omega)}+ |v_{\Gamma,0}|_{L^\infty(\Gamma)} \right)^\alpha
\quad {\rm for~all~} t \in [0,T],\\
	|\bar{v}_h^*|_{L^\infty(0,T;V)} = \max_{i=1,2,\ldots,n} \bigl| \gamma(v_i) \bigr| _{V} \le M_4, \\  
	|\hat{v}^*_h|_{L^\infty (0,T;V)} = \max_{i=0,1,\ldots,n} \bigl| \gamma(v_i) \bigr| _{V} \le M_4, \\ 
	|\hat{v}^*_h|_{L^2(0,T;H)}^2 \le \frac{h}{2} \bigl|\gamma(v_0) \bigr|_H^2+|\bar{v}_h^*|_{L^2(0,T;H)}^2 \le M_5,
\end{gather*}
where $M_4$ and $M_5$ are positive constants independent of $n \in \mathbb{N}$. 
Here, we used Lemmas~\ref{linf} and \ref{ests}, as well as $\nabla \gamma (v_i)=\alpha v_i^{\alpha-1} \nabla v_i$,
Then, we see that there exist  
a subsequence $\{h_k \}_{k \in \mathbb{N}}$ and 
functions 
$ \boldsymbol{v} =(v,v_\Gamma ) \in L^\infty(0,T;\boldsymbol{V} \cap \boldsymbol{L}^\infty)$, 
$\boldsymbol{v}^* =(v^*,v_\Gamma^*) \in H^1(0,T;\boldsymbol{H}) \cap 
L^\infty(0,T;\boldsymbol{V} \cap \boldsymbol{L}^\infty)$ such that 
\begin{align*}
	\bar{\boldsymbol{v}}_{h_k} \to \boldsymbol{v} & \quad \mbox{weakly star in } L^\infty(0,T;\boldsymbol{V}),  \\
	\hat{v}_{h_k} \to v & \quad \mbox{a.e.\ in } Q, \\
	\hat{v}_{\Gamma, h_k} \to v_\Gamma & \quad \mbox{a.e.\ on }\Sigma, \\
	\hat{\boldsymbol{v}}_{h_k}^* \to \boldsymbol{v}^* & \quad \mbox{weakly in } H^1(0,T;\boldsymbol{H}),  \\
	& \quad \mbox{weakly star in } L^\infty(0,T;\boldsymbol{V}),  \\
	\bar{\boldsymbol{v}}_{h_k}^* \to \boldsymbol{v}^* & \quad \mbox{weakly star in } L^\infty(0,T;\boldsymbol{V}),\\
	\hat{v}_{h_k}^* \to v^* & \quad \mbox{a.e.\ in } Q, \\
	\hat{v}_{\Gamma, h_k}^* \to v^*_\Gamma & \quad \mbox{a.e.\ on }\Sigma
\end{align*}
as $k \to \infty$. Moreover, we apply the {A}ubin--{L}ions compactness theorem \cite[Section 8, Corollary 4]{Sim87} and 
\eqref{tool3}
to obtain the
strong convergences (not re-labelled):
\begin{align*}
	\hat{v}_{h_k}^* \to v^* & \quad \mbox{strongly in } C\bigl([0,T];L^r (\Omega) \bigr),  \\
	\hat{v}_{\Gamma,h_k}^* \to v_\Gamma^* & \quad \mbox{strongly in } C\bigl([0,T];L^r (\Gamma) \bigr)
	\quad {\rm for~} r \in [2,\infty), \\
	\bar{\boldsymbol{v}}_{h_k}^*,
	\underline{\boldsymbol{v}}^*_{h_k}
	 \to \boldsymbol{v}^* & \quad \mbox{strongly in } L^2(0,T;\boldsymbol{H})
\end{align*}
as $k \to \infty$. According Lemma~\ref{linf}, we use 
$V \cap L^\infty (\Omega) \hookrightarrow \hookrightarrow L^r(\Omega) \subset H$ and
$V_\Gamma \hookrightarrow \hookrightarrow L^r(\Gamma) \subset H_\Gamma$ for all $r \in [2,\infty)$, 
where ``$\hookrightarrow \hookrightarrow$'' stands for the compact imbedding. 
From the demi-closedness of the maximal monotone operator $\boldsymbol{A}$,  
we obtain 
\begin{equation*}
	v^* = \gamma (v) \quad 
	\mbox{a.e.\ in } Q, 
	\quad 
	v_\Gamma^* =\gamma (v_\Gamma) 
	\quad 
	\mbox{a.e.\ on } \Sigma.
\end{equation*} 
Thus, from \eqref{ap1b}--\eqref{ap5b}, letting $k \to \infty$ in their weak formulation,
we see that $\boldsymbol{v}=(v,v_\Gamma)$ satisfy 
\begin{gather*}
	\int_\Omega \partial_t \gamma \bigl(v(t) \bigr) z dx 
	+ \int_\Gamma \partial_t \gamma \bigl(v_\Gamma(t) \bigr) z_\Gamma d\Gamma 
	+ \int_\Omega \nabla v(t) \cdot \nabla z dx  
	+ a \int_\Omega v(t) z dx  
	\nonumber \\
	{}
	+ \int_\Gamma \nabla_\Gamma v_\Gamma (t) \cdot \nabla_\Gamma z_\Gamma d\Gamma 
	+ b \int_\Gamma v_\Gamma (t)  z_\Gamma d\Gamma 
	= \int_\Omega f \bigl( \gamma \bigl(v(t) \bigr) \bigr) z dx 
	+ \int_\Gamma f_\Gamma \bigl( \gamma \bigl(v_\Gamma(t) \bigr) \bigr) z_\Gamma d\Gamma
\end{gather*}
for all $\boldsymbol{z}=(z,z_\Gamma) \in \boldsymbol{V}$, for a.a.\ $t \in (0,T)$, 
initial conditions $v(0)=v_0$ in $H$, and $v_\Gamma(0)=v_{\Gamma,0}$ in $H_\Gamma$. 
Let $z \in {\mathcal D}(\Omega)$, then $z_\Gamma =0$ and 
\begin{equation*}
	-\Delta v(t) =  f \bigl( \gamma \bigl(v(t) \bigr) \bigr) -a v(t) -\partial _t \gamma \bigl( v(t)\bigr) 
	\quad 
	\mbox{in } {\mathcal D}'(\Omega).
\end{equation*}
On the other hand, $f ( \gamma (v) )-\partial _t \gamma ( v) - a v \in L^2(0,T;H)$; therefore, we obtain 
$-\Delta v \in L^2(0,T;H)$ and 
\begin{equation}
	\partial _t \gamma \bigl(v(t) \bigr) -\Delta v(t) + a v(t)=  f \bigl( \gamma \bigl(v(t) \bigr) \bigr)
	\quad \mbox{in } H,
	\label{equ1}
\end{equation}
for a.a.\ $t \in (0,T)$. 
Next, for any $\boldsymbol{z} \in \boldsymbol{V}$, we see from \eqref{equ1} that 
\begin{gather}
	\int_\Gamma \partial_t \gamma \bigl(v_\Gamma(t) \bigr) z_\Gamma d\Gamma 
	+ \bigl\langle \partial _{\boldsymbol{\nu}} v(t), z_\Gamma \bigr\rangle
	+ \int_\Gamma \nabla_\Gamma v_\Gamma (t) \cdot \nabla_\Gamma z_\Gamma d\Gamma 
	+ b \int_\Gamma v_\Gamma (t)  z_\Gamma d\Gamma 
	\nonumber \\
	=
	\int_\Gamma f_\Gamma \bigl( \gamma \bigl(v_\Gamma(t) \bigr) \bigr) z_\Gamma d\Gamma,
	\label{weakg}
\end{gather}
for a.a.\ $t \in (0,T)$. 
Here, we apply the bootstrap argument for the dynamic boundary condition with surface diffusion 
to gain higher regularity (see, e.g., \cite{CC13, CF15, Fuk16, Fuk16b}).
We have already obtained $-\Delta v \in L^2(0,T;H)$ and 
$v_\Gamma \in L^2(0,T;V_\Gamma)$. 
Therefore, from the elliptic regularity theorem (see, e.g., \cite[Theorem~3.2, p.~1.79]{BG87}), we infer 
\begin{gather*}
	v \in L^2 \bigl (0,T;H^{3/2} (\Omega ) \bigr )
\end{gather*}
and consequently, from the trace theory with elliptic operator type \cite[Theorem~2.27, p.~1.64]{BG87}, we 
obtain 
$\partial_{\boldsymbol{\nu}} v \in L^2(0,T;H_\Gamma )$.
Therefore, from \eqref{weakg}, we also obtain 
$\Delta _\Gamma v_\Gamma \in L^2(0,T;H_\Gamma )$  
such that 
\begin{equation*} 
	\partial_t \gamma \bigl( v_\Gamma(t)\bigr) + \partial_{\boldsymbol{\nu}} v (t) 
	-\Delta _\Gamma v_\Gamma(t)+bv_\Gamma(t) = f_\Gamma \bigl( \gamma \bigl(v_\Gamma(t) \bigr) \bigr) 
	\quad \mbox{in } H_\Gamma,
	\label{equ2}
\end{equation*} 
for a.a.\ $t \in (0,T)$. 
Moreover, 
the information $-\Delta_{\Gamma } v_\Gamma \in L^2(0,T;H_\Gamma )$ 
implies 
$v_\Gamma \in L^2 ( 0,T;W_\Gamma)$ (see, e.g., \cite[p.~104]{Gri09}). 
Finally, this yields $v_\Gamma \in L^2 ( 0,T;H^{3/2}(\Gamma))$. 
Using the elliptic regularity theorem again, we see that 
$v \in L^2 (0,T;W)$ and $\boldsymbol{v}=(v,v_\Gamma)$ satisfies \eqref{af1}--\eqref{af5}. 
\smallskip

Next, we obtain the estimates \eqref{aenergy}--\eqref{aholder}. 
Firstly, the estimates \eqref{alinfty1}--\eqref{alinfty2} is the direct consequence of Lemma~3.1. 
Secondly, 
we obtain the uniform estimates 
for 
$\hat{\boldsymbol{v}}^{**}_h:=(\hat{v}^{**}_h,\hat{v}_{\Gamma,h}^{**})$
using\eqref{apone} \eqref{tool1}, for example
\begin{gather*}
	|\partial_t \hat{v}^{**}_h|_{L^2(0,T;H)}^2 \le M_1, \\
		\bigl| \hat{v}_h^{**}(t) \bigr|_{L^\infty(\Omega)} 
	\le e^{(\alpha+1)L T/(2\alpha)}\left( |v_0|_{L^\infty(\Omega)}+ |v_{\Gamma,0}|_{L^\infty(\Gamma)} \right)^{(\alpha+1)/2}
	\quad {\rm for~all~} t \in [0,T], \\
	|\hat{v}^{**}_h|_{L^2(0,T;H)}^2 \le \frac{h}{2} \bigl|v_0^{(\alpha+1)/2} \bigr|_H^2
	+|\bar{v}_h^{**}|_{L^2(0,T;H)}^2 \le M_6,
\end{gather*}
where $M_6$ is a positive constant independent of $n \in \mathbb{N}$. 
Then, there exists 
a subsequence (not re-labelled) and a
function
$\boldsymbol{v}^{**} =(v^{**},v_\Gamma^{**}) \in H^1(0,T;\boldsymbol{H}) \cap L^\infty(0,T;\boldsymbol{V} \cap \boldsymbol{L}^\infty )$ such that 
\begin{align*}
	\hat{\boldsymbol{v}}_{h_k}^{**} \to \boldsymbol{v}^{**} & \quad \mbox{weakly in } H^1(0,T;\boldsymbol{H}),  \\
	& \quad \mbox{weakly star in } L^\infty(0,T;\boldsymbol{V}), \\
	\hat{\boldsymbol{v}}_{h_k}^{**} \to \boldsymbol{v}^{**} & \quad \mbox{strongly in } C\bigl([0,T];\boldsymbol{H} \bigr),  \\
	\hat{v}_{h_k}^{**} \to v^{**} & \quad \mbox{a.e.\ in } Q, \\
	\hat{v}_{\Gamma, h_k}^{**} \to v^{**}_\Gamma & \quad \mbox{a.e.\ on }\Sigma
\end{align*}
as $k \to \infty$. Moreover, 
\begin{equation*}
	\boldsymbol{v}^{**}=\bigl( v^{(\alpha+1)/2},v_\Gamma^{(\alpha+1)/2} \bigr). 
\end{equation*}
\smallskip

Now, for all $t \in [0,T]$ and all $k \in \mathbb{N}$ with $h_k=T/n_k$,
there exists $i_k \in \{1,2,\ldots, n_k \}$ such that 
$t \in [(i_k-1) h_k, i_k h_k)$, if $t=T$ then put $i_k=n_k$.  
Moreover, $(i_k-1)h_k \to t$, $i_k h_k \to t$ as $k \to \infty$. Therefore, 
we rewrite \eqref{enelast} into the following form
\begin{align*}
	& 
	\frac{4\alpha}{(\alpha+1)^2} 
	\int_0^t	
	\left(
	\int_\Omega 
	\bigl| \partial _t \hat{v}_{h_k}^{**}(s)
	\bigr|^2 dx 
	+ 
	\int_\Gamma
	\bigl| \partial _t \hat{v}_{\Gamma, h_k}^{**}(s)
	\bigr|^2 d\Gamma
	\right) ds
	\nonumber \\
	 & \quad {}+ \varphi_1 \bigl ( \bar{\boldsymbol{v}}_{h_k} (t)\bigr) 
	 -
	  \int _\Omega \widehat{f} \bigl( \bar{v}_{h_k}^* (t)  \bigr) dx
	  -
	   \int _\Gamma \widehat{f}_\Gamma \bigl(\bar{v}_{\Gamma,h_k}^* (t)  \bigr) d\Gamma
	 \nonumber \\
	& \le 
	\frac{4\alpha}{(\alpha+1)^2} 
	\int_{(i_k-1) h_k }^t	
	\left(
	\int_\Omega 
	\bigl| \partial _t \hat{v}_{h_k}^{**}(s)
	\bigr|^2 dx 
	+ 
	\int_\Gamma
	\bigl| \partial _t \hat{v}_{\Gamma, h_k}^{**}(s)
	\bigr|^2 d\Gamma
	\right) ds
	\nonumber \\
	& \quad {}
	+
	\varphi_1 ( \boldsymbol{v}_{0}) 
	-
	\int _\Omega \widehat{f}_\gamma (v_{0} )  dx
	-
	\int _\Gamma \widehat{f}_{\Gamma,\gamma} (v_{\Gamma,0}) d\Gamma,
\end{align*}
and take the $\liminf_{k \to \infty}$ of both side, then, applying the lower semi-continuity and 
the {L}ebesgue dominated convergence theorem, we obtain the energy estimate \eqref{aenergy}. 
The $L^\infty$-boundedness \eqref{alinfty1}--\eqref{alinfty2} is a direct consequence of Lemma~\ref{linf}. 
Finally, using the fundamental inequality 
\begin{equation*}
	|r-s|
	\le 	
	\bigl| r^{(\alpha+1)/2}-s^{(\alpha+1)/2} \bigr|^{2/(\alpha+1)}
\end{equation*}
for all $r,s \ge 0$ (see Appendix), we obtain that 
\begin{align*}
	& 
	\bigl| v(t)-v(s) \bigr|_H^2 + \bigl| v_\Gamma (t)-v_\Gamma (s) \bigr|_H ^2
	\nonumber \\
	& \le 
	 \int_\Omega \bigl| v^{(\alpha+1)/2}(t)- v^{(\alpha+1)/2}(s) \bigr|^{4/(\alpha+1)} dx 
	+
	 \int_\Gamma \bigl| v_\Gamma^{(\alpha+1)/2}(t)- v_\Gamma^{(\alpha+1)/2}(s) \bigr|^{4/(\alpha+1)} d\Gamma
	\nonumber \\
	& \le \left( 
	|\Omega|^{(\alpha-1)/(\alpha+1)} 
	\bigl| \partial_t v^{(\alpha+1)/2} 
	\bigr|_{L^2(0,T;H)}^{4/(\alpha+1)}  
	+|\Gamma|^{(\alpha-1)/(\alpha+1)} 
	\bigl| \partial_t v_\Gamma^{(\alpha+1)/2} 
	\bigr|_{L^2(0,T;H_\Gamma)}^{4/(\alpha+1)}  
	\right)
	|t-s|^{2/(\alpha+1)}
\end{align*}
for all $s,t \in [0,T]$. 
Thus, we obtain the {H}\"older continuity \eqref{aholder}. 
\smallskip

The proof of uniqueness is quite standard. Let $\boldsymbol{w}:=(w,w_\Gamma)$ be 
the solution starting from the initial data $\boldsymbol{w}_0:=(w_0, w_{\Gamma,0})$ and 
compare it with $\boldsymbol{v}$.  
Define some approximation $\sigma_k \in C^1(\mathbb{R})$ of the signum function ${\rm sgn}$ satisfying 
$\sigma_k(0)=0$,
$-1\le \sigma_k(r) \le 1$, $\sigma_k'(r) \ge 0$, 
and $\sigma_k(r) \to {\rm sgn}r$ as $k \to \infty$, for all $r \in \mathbb{R}$. 
Taking the difference of the equations for $\boldsymbol{v}=(v,v_\Gamma)$ and $\boldsymbol{w}=(w,w_\Gamma)$ we obtain
\begin{gather*}
	\int_\Omega 
	\bigl( \partial_t \gamma (v) -\partial_t \gamma (w) \bigr) z dx 
	+ \int_\Gamma 
	\bigl( \partial_t \gamma (v_\Gamma)-\partial_t \gamma (w_\Gamma) \bigr) z_\Gamma d\Gamma 
	+ \int_\Omega \nabla (v-w) \cdot \nabla z dx  
	\nonumber \\
	+ a \int_\Omega (v-w) z dx  
	+ \int_\Gamma \nabla_\Gamma (v_\Gamma -w_\Gamma) \cdot \nabla_\Gamma z_\Gamma d\Gamma 
	+ b \int_\Gamma (v_\Gamma -w_\Gamma )  z_\Gamma d\Gamma 
	\nonumber \\
	= \int_\Omega \bigl(  f \bigl( \gamma (v) \bigr) - f\bigl( \gamma (w) \bigr) \bigr) z dx 
	+ \int_\Gamma \bigl( f_\Gamma \bigl( \gamma (v_\Gamma ) \bigr)-f_\Gamma \bigl( \gamma (w_\Gamma ) \bigr) \bigr) z_\Gamma d\Gamma
\end{gather*}
for all $\boldsymbol{z}=(z,z_\Gamma) \in \boldsymbol{V}$, for a.a.\ $t \in (0,T)$. 
Here, we omit the time variables $v=v(t)$ and $w=w(t)$. We take $z:=\sigma_k([v-w]^+)$ and 
$z_\Gamma :=\sigma _k([v_\Gamma-w_\Gamma]^+)$, where $[r]^+:=\max\{0, r \}$ for all $r \in \mathbb{R}$. 
Then, we have 
\begin{equation*}
	\int_\Omega \nabla (v-w) \cdot \nabla \sigma_k \bigl( [v-w]^+ \bigr) dx  
	= \int_\Omega \sigma_k' \bigl( [v-w]^+ \bigr) \bigl| \nabla [v-w]^+ \bigr|^2 dx  \ge 0
\end{equation*}
and the same kind of positivity for the term of $a(v-w)$, $\nabla_\Gamma(v_\Gamma-w_\Gamma)$, and $b(v_\Gamma-w_\Gamma)$, respectively. 
On the other hand, considering 
${\rm sgn} ([v-w]^+)={\rm sgn} ( [ \gamma(v)-\gamma(w) ]^+)$ and letting $k \to \infty$,
we obtain 
\begin{align*}
	& \int_\Omega \bigl( \partial_t \gamma (v) -\partial_t \gamma (w) \bigr)  \sigma_k \bigl( [v-w]^+ \bigr) dx  
	\nonumber \\
	& \to \int_\Omega \partial_t \bigl( \gamma (v) - \gamma (w) \bigr)   {\rm sgn} \bigl( [v-w]^+ \bigr) dx 
	\nonumber \\
	& = \int_\Omega \partial_t \bigl( \gamma (v) - \gamma (w) \bigr)  {\rm sgn} \bigl( \bigl[ \gamma(v)-\gamma(w) \big]^+ \bigr) dx 
	\nonumber \\
	& = \frac{d}{dt }\int_\Omega  \bigl| \bigl[ \gamma (v) -  \gamma (w) \bigr]^+ \bigr| dx
\end{align*}
and same as $( \partial_t \gamma (v_\Gamma) -\partial_t \gamma (w_\Gamma) )  \sigma_k ( [v_\Gamma-w_\Gamma]^+)$. 
Moreover, 
\begin{align*}
	& \int_\Omega \bigl(  f \bigl( \gamma (v) \bigr) - f\bigl( \gamma (w) \bigr) \bigr)  \sigma_k \bigl( [v-w]^+ \bigr) dx  
	\nonumber \\
	& \to \int_\Omega \bigl(  f \bigl( \gamma (v) \bigr) - f\bigl( \gamma (w) \bigr) \bigr)   {\rm sgn} \bigl( [v-w]^+ \bigr) dx 
	\nonumber \\
	& = \int_\Omega \bigl(  f \bigl( \gamma (v) \bigr) - f\bigl( \gamma (w) \bigr) \bigr)  {\rm sgn} \bigl( \bigl[ \gamma(v)-\gamma(w) \big]^+ \bigr) dx 
	\nonumber \\
	& \le L \int_\Omega  \bigl| \bigl[ \gamma (v) -  \gamma (w) \bigr]^+ \bigr| dx, 
\end{align*}
same as $(f_\Gamma( \gamma (v_\Gamma)) - f_\Gamma( \gamma (w_\Gamma) ) )  \sigma_k ( [v_\Gamma-w_\Gamma]^+ )$. 
By applying the {G}ronwall inequality 
 \begin{align*}
	& \bigl| \bigl[ \gamma \bigl( v(t) \bigr) -  \gamma \bigl(w(t) \bigr) \bigr]^+ \bigr|_{L^1(\Omega)} 
	 + \bigl| \bigl[ \gamma \bigl(v_\Gamma(t) \bigr) -  \gamma \bigl(w_\Gamma(t) \bigr) \bigr]^+ \bigr|_{L^1(\Gamma)}
	 \nonumber \\
	& \le \left(
	  \bigl| \bigl[ \gamma (v_0) -  \gamma (w_0) \bigr]^+ \bigr|_{L^1(\Omega)} 
	 + \bigl| \bigl[ \gamma (v_{\Gamma,0}) -  \gamma (w_{\Gamma,0}) \bigr]^+ \bigr|_{L^1(\Gamma)} 
	 \right)
	 e^{L t}
\end{align*}
for all $t \in [0,T]$, this comparison estimate gives us the uniqueness of the solution. 
\end{proof}

\section{Proof of main theorems}

For a convenience, we define 
\begin{align*}
	Y(\boldsymbol{z}) & :=\frac{1}{1+m}\int_\Omega z^{(1+m)/m} dx + \frac{1}{1+m}\int_\Gamma z_\Gamma^{(1+m)/m} d\Gamma \\
	& = \frac{\alpha}{\alpha+1}\int_\Omega z^{\alpha+1} dx + \frac{\alpha}{\alpha+1}\int_\Gamma z_\Gamma^{\alpha+1} d\Gamma.
\end{align*}
In the case of locally {L}ipschitz continuous perturbations, 
we apply the cut off method to prove local existence.

\begin{proposition}
\label{well}
Let $0<m \le 1$. Let us 
assume that 
$\boldsymbol{v}_0:=(v_0, v_{\Gamma,0}) \in \boldsymbol{V} \cap \boldsymbol{L}^\infty$ with 
$v_0 \ge 0$ and $v_{\Gamma,0} \ge 0$. Then there exist $T_{\rm max}>0$ depending on the initial data, 
as well as
a unique pair of non-negative functions 
$\boldsymbol{v}:=(v,v_\Gamma)$ 
such that they solve \eqref{f1}--\eqref{f5} on $[0, T_{\rm max})$. 
Moreover, the functions $v$ and $v_\Gamma$ belong to 
\begin{align*}
	& v \in C \bigl( [0,T_{\rm max}); L^{\alpha+1}(\Omega) \bigr) 
	\cap L^\infty \bigl(0,T;V \cap L^\infty(\Omega) \bigr) \cap L^2(0,T;W), \\
	& v^{(\alpha+1)/2} = v^{(1+m)/2m} \in H^1(0,T;H), \\
	& \gamma(v) \in H^1(0,T;H) \cap L^\infty (0,T;V),\\
	& v_\Gamma \in 
	C \bigl( [0,T_{\rm max}); L^{\alpha+1}(\Gamma) \bigr) 
	\cap L^\infty \bigl( 0,T;V_\Gamma \cap L^\infty (\Gamma)\bigr)  
	\cap L^2(0,T;W_\Gamma), \\
	& v_\Gamma^{(\alpha+1)/2} =v_\Gamma^{(1+m)/2m} \in H^1(0,T;H_\Gamma), \\
	& \gamma(v_\Gamma) \in H^1(0,T;H_\Gamma) \cap L^\infty (0,T;V_\Gamma)
\end{align*}
and satisfy the equality 
\begin{equation}
	\frac{d}{dt} Y \bigl( \boldsymbol{v}(t) \bigr) 
	+ 2 \varphi_1  \bigl( \boldsymbol{v}(t) \bigr) = 
	\lambda \int_\Omega v^{\alpha p +1}(t) dx 
	+	
	\mu \int_\Gamma v_{\Gamma}^{\alpha q +1}(t) d\Gamma
	\label{vol0}
\end{equation}
for a.a.\ $t \in (0,T_{\rm max})$. 
Furthermore, they satisfy the energy inequality
\begin{equation}
	\frac{4\alpha}{(\alpha+1)^2} 
	\int_s^t \left( 
	\int_\Omega \bigl| \partial_t \bigl(v^{(\alpha+1)/2}(\tau) \bigr)\bigr|^2 dx 
	+  \int_\Gamma \bigl| \partial_t \bigl( v_\Gamma^{(\alpha+1)/2}(\tau) \bigr) \bigr|^2 d\Gamma \right) d\tau 
	+ J \bigl(\boldsymbol{v}(t) \bigr) \le  J \bigl( \boldsymbol{v}(s) \bigr) 
	\label{energyst}
\end{equation}
for all $s, t \in [0,T_{\rm max})$ satisfying $s \le t$.
\end{proposition}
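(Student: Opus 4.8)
The plan is to reduce Proposition~\ref{well} to the globally Lipschitz result Proposition~\ref{global} via a truncation. Since $g(r)=|r|^{p-1}r$ and $g_\Gamma(r)=|r|^{q-1}r$ are only locally Lipschitz, for each $K>0$ I introduce the truncations $g_K,g_{\Gamma,K}:\mathbb{R}\to\mathbb{R}$ that coincide with $g,g_\Gamma$ on $[-K,K]$ and are extended by the constant values $\pm g(K),\pm g_\Gamma(K)$ outside; these are odd, non-decreasing, globally Lipschitz with some constant $L_K>0$, and vanish at the origin. Thus Proposition~\ref{global} applies with $f=g_K$, $f_\Gamma=g_{\Gamma,K}$ and yields a unique non-negative global-in-time pair $\boldsymbol{v}_K=(v_K,v_{\Gamma,K})$ with all the regularity listed there, together with the energy inequality \eqref{aenergy} and the $L^\infty$-bounds \eqref{alinfty1}--\eqref{alinfty2}.

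Next I fix the truncation level. Set $M:=2\bigl(|v_0|_{L^\infty(\Omega)}+|v_{\Gamma,0}|_{L^\infty(\Gamma)}\bigr)$ and $K:=M^\alpha$. Then \eqref{alinfty1}--\eqref{alinfty2} give $|v_K(t)|_{L^\infty(\Omega)},|v_{\Gamma,K}(t)|_{L^\infty(\Gamma)}\le e^{L_Kt/\alpha}\,M/2\le M$ for all $t\in[0,T_1]$, where $T_1:=(\alpha/L_K)\ln 2>0$. On $[0,T_1]$ one therefore has $\gamma(v_K)=v_K^\alpha\le K$ and $\gamma(v_{\Gamma,K})\le K$ a.e., hence $g_K(\gamma(v_K))=g(\gamma(v_K))$ and $g_{\Gamma,K}(\gamma(v_{\Gamma,K}))=g_\Gamma(\gamma(v_{\Gamma,K}))$, so $\boldsymbol{v}_K$ restricted to $[0,T_1]$ solves \eqref{f1}--\eqref{f5}. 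A continuation argument then lets me glue such local pieces: two bounded solutions of \eqref{f1}--\eqref{f5} on a common interval obey the $L^1$-contraction estimate proved at the end of Subsection~3.3 (the nonlinearities being Lipschitz on the range of the solutions), hence coincide, so $T_{\rm max}\in(0,\infty]$ can be defined as the supremum of existence times, and the regularity on $[0,T)$ for every $T<T_{\rm max}$ is inherited from Proposition~\ref{global} on each finite subinterval. For the continuity $v\in C([0,T_{\rm max});L^{\alpha+1}(\Omega))$ I combine $v^{(\alpha+1)/2}\in H^1(0,T;H)\hookrightarrow C([0,T];H)$ with the elementary bound $|r-s|^{(\alpha+1)/2}\le|r^{(\alpha+1)/2}-s^{(\alpha+1)/2}|$ for $r,s\ge0$ (cf.\ the Appendix), which gives $\int_\Omega|v(t)-v(s)|^{\alpha+1}\,dx\le\int_\Omega|v^{(\alpha+1)/2}(t)-v^{(\alpha+1)/2}(s)|^2\,dx$, and likewise on $\Gamma$.

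It remains to derive \eqref{vol0} and \eqref{energyst}. For \eqref{vol0} I test \eqref{f1} with $v(t)\in W$ and \eqref{f3} with $v_\Gamma(t)\in W_\Gamma$, integrate by parts, and use \eqref{f2}: the boundary term $\int_\Gamma(\partial_{\boldsymbol{\nu}}v)\,v_{|_\Gamma}\,d\Gamma$ produced by $-\Delta v$ cancels the one from the trace of \eqref{f3}, leaving $\int_\Omega|\nabla v|^2\,dx+a\int_\Omega v^2\,dx+\int_\Gamma|\nabla_\Gamma v_\Gamma|^2\,d\Gamma+b\int_\Gamma v_\Gamma^2\,d\Gamma=2\varphi_1(\boldsymbol{v}(t))$; by non-negativity the reaction terms give $\lambda\int_\Omega g(\gamma(v))v\,dx=\lambda\int_\Omega v^{\alpha p+1}\,dx$ and $\mu\int_\Gamma v_\Gamma^{\alpha q+1}\,d\Gamma$; and since $Y(\boldsymbol{v})=\frac{\alpha}{\alpha+1}\bigl(|v^{(\alpha+1)/2}|_H^2+|v_\Gamma^{(\alpha+1)/2}|_{H_\Gamma}^2\bigr)$ with $v^{(\alpha+1)/2},v_\Gamma^{(\alpha+1)/2}\in H^1(0,T;H)$, the chain rule identifies the time terms with $\frac{d}{dt}Y(\boldsymbol{v}(t))=\int_\Omega\partial_t\gamma(v)\,v\,dx+\int_\Gamma\partial_t\gamma(v_\Gamma)\,v_\Gamma\,d\Gamma$. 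For \eqref{energyst} I note that with $f=g$, $f_\Gamma=g_\Gamma$ the primitives are $\widehat{f}_\gamma(r)=\frac{\lambda}{\alpha p+1}r^{\alpha p+1}$ and $\widehat{f}_{\Gamma,\gamma}(r)=\frac{\mu}{\alpha q+1}r^{\alpha q+1}$ for $r\ge0$, so that $\int_\Omega\widehat{f}_\gamma(v)\,dx+\int_\Gamma\widehat{f}_{\Gamma,\gamma}(v_\Gamma)\,d\Gamma=\varphi_2(\boldsymbol{v})$ because $p_*=\lambda p+\mu q$; hence $\varphi_1-\varphi_2=J$, and \eqref{aenergy} of Proposition~\ref{global}, applied on $[s,t]$ to the solution restarted from $\boldsymbol{v}(s)$ (legitimate by uniqueness), becomes exactly \eqref{energyst}.

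I expect the continuation/gluing step to be the main technical point: the $L^\infty$-bound of Proposition~\ref{global} degenerates as $K\to\infty$ (through $L_K$), so this procedure only yields a local existence time, and the maximality of $T_{\rm max}$ must be extracted from the uniqueness estimate rather than from a uniform a priori bound.
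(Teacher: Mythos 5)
Your proposal is correct and follows essentially the same route as the paper: truncate $g,g_\Gamma$ to globally Lipschitz functions, invoke Proposition~\ref{global}, use the $L^\infty$-bounds \eqref{alinfty1}--\eqref{alinfty2} to find a short interval on which the truncation is inactive, define $T_{\rm max}$ by continuation, and then obtain \eqref{vol0} from the chain rule for $\partial_t\gamma(v)$ and \eqref{energyst} from \eqref{aenergy} after identifying the primitives $\widehat{f}_\gamma,\widehat{f}_{\Gamma,\gamma}$ with $\varphi_2$. The only differences are cosmetic (truncation level $K=M^\alpha$ versus the paper's $(M+1)^\alpha$, and your slightly more explicit use of the Appendix inequality for the $L^{\alpha+1}$-continuity).
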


\begin{proof}
Let $M:=2(| v_0 |_{L^\infty(\Omega)} + | v_{\Gamma,0} |_{L^\infty(\Gamma)})$. 
Moreover, define $g_M, g_{\Gamma,M}:\mathbb{R} \to \mathbb{R}$ by 
\begin{gather*}
	g_M(r):=
	\begin{cases}
	-(M+1)^{\alpha p} &  \mbox{if } r<-(M+1)^{\alpha},\\
	|r|^{p-1}r &  \mbox{if } |r| \le  (M+1)^{\alpha}, \\
	(M+1)^{\alpha p} &  \mbox{if } r>(M+1)^{\alpha},
	\end{cases}
	\\
	g_{\Gamma, M}(r):=
	\begin{cases}
	-(M+1)^{\alpha q} &  \mbox{if } r<- (M+1)^{\alpha},\\
	|r|^{q-1}r &  \mbox{if } |r| \le  (M+1)^{\alpha}, \\
	(M+1)^{\alpha q} &  \mbox{if } r> (M+1)^{\alpha},
	\end{cases}
\end{gather*}
where $p,q > 1$. 
Then, applying Proposition~\ref{global}, for each $T>0$ there exists a unique  
$\boldsymbol{v}_M:=(v_M,v_{\Gamma,M})$ such that solves 
\eqref{af1}--\eqref{af5} with $f:=\lambda g_M$, $f_\Gamma:=\mu g_{M,\Gamma}$.  
Moreover, from the $L^\infty$-boundedness \eqref{alinfty1}--\eqref{alinfty2}, we have 
\begin{align*}
	\bigl| v_M(t) \bigr|_{L^\infty(\Omega)} 
	\le 
	M e^{p_* (M+1)^{\alpha (p_*-1)} t/\alpha }, \quad 
	\bigl| v_{\Gamma,M}(t) \bigr|_{L^\infty(\Gamma)} \le 
	M e^{p_*(M+1)^{\alpha(p_*-1)} t/ \alpha}
\end{align*}
for all $t \in [0,T]$, where we recall that $p_*=\lambda p+\mu q$. Now, taking $\delta >0$ satisfying 
\begin{equation}
	M e^{p_* (M+1)^{\alpha (p_*-1)} \delta/\alpha} \le M+1,
	\label{inv}
\end{equation}
$\boldsymbol{v}_M$ solves the original problem \eqref{f1}--\eqref{f5} on $[0,\delta]$. 
Moreover, we define 
\begin{equation*}
	T_{\rm max}:=\sup \bigl\{ \delta >0 \ : \ \mbox{the problem \eqref{f1}--\eqref{f5} has the unique solution on $[0,\delta]$} \bigr\}.
\end{equation*}
Thus, we have proved the local existence of \eqref{f1}--\eqref{f5} on $[0,T]$ for all $T \in (0,T_{\rm max})$. 
Additionally, from equations \eqref{f1}--\eqref{f5}, 
we obtain that $\boldsymbol{v}$ satisfies 
\eqref{energyst}. 
Next, from the characterization of the {S}obolev functions and the chain rule, we obtain that 
\begin{equation*} 
	\partial _t \gamma (v)=\partial_t v^\alpha = \partial _t \bigl( v^{(\alpha+1)/2} \bigr)^{2\alpha/(\alpha+1)} = 
	\frac{2\alpha}{\alpha+1} v^{(\alpha-1)/2} \partial_t v^{(\alpha+1)/2} 
\end{equation*}
and also, for $v_\Gamma^\alpha$ (see, \cite[Lemma~2.3]{Vit00}). Therefore, 
from the equations \eqref{f1}--\eqref{f3}, we obtain that the following equality holds:
\begin{equation}
	\int_s^t \left( \int_\Omega \partial_t \gamma \bigl(v(\tau) \bigr) v (\tau) dx 
	+ \int_\Gamma \partial_t \gamma \bigl( v_\Gamma(\tau) \bigr)v_\Gamma(\tau) d \Gamma \right) d\tau
	=Y \bigl( \boldsymbol{v}(t) \bigr) - Y \bigl( \boldsymbol{v} (s)\bigr)
	\label{equationst}
\end{equation}
for all $s,t \in [0, T_{\rm max})$, 
that is, $Y(\boldsymbol{v})$ is absolutely continuous on $[0,T_{\rm max})$. This implies \eqref{vol0} and 
additional continuities on $[0,T]$ in $L^{\alpha+1}(\Omega)$ and $L^{\alpha+1}(\Gamma)$.   
\end{proof}

Also, we obtain that 
$J(\boldsymbol{v}(t)) \le J(\boldsymbol{v}_0)$ for all $t \in [0,T_{\rm \max})$, and $J(\boldsymbol{v}(t))$ is 
monotone decreasing as $t \to T_{\rm max}$. Moreover, we obtain the 
invariance of ${\mathcal W}$ as follows.

\begin{lemma} The set 
${\mathcal W} \cap \boldsymbol{L}^\infty $ is invariant; that is, if 
$\boldsymbol{v}_0 \in {\mathcal W} \cap \boldsymbol{L}^\infty$, then 
$\boldsymbol{v}(t) \in {\mathcal W} \cap \boldsymbol{L}^\infty$ for all $t \in [0,T_{\max})$. 
\end{lemma}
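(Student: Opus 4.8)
The plan is to show the invariance of $\mathcal{W}\cap\boldsymbol{L}^\infty$ by a continuity argument combined with the energy inequality \eqref{energyst} and the equality \eqref{vol0}. Fix $\boldsymbol{v}_0\in\mathcal{W}\cap\boldsymbol{L}^\infty$; by Proposition~\ref{well} the solution $\boldsymbol{v}$ is defined on $[0,T_{\max})$, stays non-negative, and $t\mapsto\boldsymbol{v}(t)$ is continuous into $L^{\alpha+1}(\Omega)\times L^{\alpha+1}(\Gamma)$, hence the maps $t\mapsto\varphi_2(\boldsymbol{v}(t))$ and (via the energy inequality) $t\mapsto J(\boldsymbol{v}(t))$ have the right semicontinuity/monotonicity to run the argument. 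Define the set $I:=\{t\in[0,T_{\max}): \boldsymbol{v}(s)\in\mathcal{W}\cap\boldsymbol{L}^\infty\text{ for all }s\in[0,t]\}$ and let $T^*:=\sup I$. We have $0\in I$ since $\boldsymbol{v}_0\in\mathcal{W}$. The two defining inequalities of $\mathcal{W}$ are $J(\boldsymbol{z})<d$ and $2\varphi_1(\boldsymbol{z})>(\alpha p_*+1)\varphi_2(\boldsymbol{z})$, together with non-negativity and $\boldsymbol{z}\neq\boldsymbol{0}$; the plan is to show both survive for all $t<T_{\max}$.

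First I would handle $J$: by \eqref{energyst}, $J(\boldsymbol{v}(t))\le J(\boldsymbol{v}(s))\le J(\boldsymbol{v}_0)<d$ for all $0\le s\le t<T_{\max}$, so the condition $J(\boldsymbol{v}(t))<d$ is preserved automatically for the whole interval of existence. Next, the key point is to show the strict inequality $2\varphi_1(\boldsymbol{v}(t))>(\alpha p_*+1)\varphi_2(\boldsymbol{v}(t))$ cannot fail. Suppose for contradiction that it fails first at some time $t_0\in(0,T_{\max})$, so that $2\varphi_1(\boldsymbol{v}(s))>(\alpha p_*+1)\varphi_2(\boldsymbol{v}(s))$ for $s<t_0$ and $2\varphi_1(\boldsymbol{v}(t_0))=(\alpha p_*+1)\varphi_2(\boldsymbol{v}(t_0))$ by continuity. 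If $\boldsymbol{v}(t_0)\neq\boldsymbol{0}$, then $\boldsymbol{v}(t_0)$ is admissible in the infimum defining $d$ in \eqref{d}, so $J(\boldsymbol{v}(t_0))\ge d$; but $J(\boldsymbol{v}(t_0))\le J(\boldsymbol{v}_0)<d$, a contradiction. Hence $\boldsymbol{v}(t_0)=\boldsymbol{0}$, i.e. $\varphi_1(\boldsymbol{v}(t_0))=0$.

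To rule out $\boldsymbol{v}(t_0)=\boldsymbol{0}$ for the first contact time, I would use the $L^{\alpha+1}$-balance \eqref{vol0}. On $[0,t_0)$ the strict well inequality holds, and I would first derive a differential inequality for $Y(\boldsymbol{v}(t))$. Using \eqref{vol0}, a Sobolev/Poincar\'e-type embedding to bound $\lambda\int_\Omega v^{\alpha p+1}+\mu\int_\Gamma v_\Gamma^{\alpha q+1}=(\alpha p_*+1)\varphi_2(\boldsymbol{v})$ by a constant times $\varphi_1(\boldsymbol{v})^{(\alpha p_*+1)/2}$ (this is exactly the estimate whose optimal constant is $d$, cf.\ Remark~4.2), and the well inequality $2\varphi_1<(\alpha p_*+1)\varphi_2$ together with $J(\boldsymbol{v})<d$, one shows $2\varphi_1(\boldsymbol{v}(t))$ stays bounded \emph{below} by a positive constant as $t\uparrow t_0$, or more directly that $Y(\boldsymbol{v}(t))$ cannot reach $0$ in finite time from a positive value — because near $\boldsymbol{v}=\boldsymbol{0}$ the superlinear term $(\alpha p_*+1)\varphi_2(\boldsymbol{v})$ is dominated by $2\varphi_1(\boldsymbol{v})$ (the well condition), so $\frac{d}{dt}Y(\boldsymbol{v})\le -2\varphi_1(\boldsymbol{v})+(\alpha p_*+1)\varphi_2(\boldsymbol{v})<0$ keeps $Y$ decreasing but the rate degenerates as $Y\to0$, preventing extinction at a finite $t_0$ while the well inequality is still strict. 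This gives the desired contradiction, so the first contact time $t_0$ does not exist and $\boldsymbol{v}(t)\in\mathcal{W}$ for all $t<T_{\max}$; that $\boldsymbol{v}(t)\in\boldsymbol{L}^\infty$ is immediate from \eqref{alinfty1}--\eqref{alinfty2} in Proposition~\ref{global} applied to the truncated problem. The main obstacle I anticipate is the last step: carefully excluding that the solution hits $\boldsymbol{0}$ exactly at the first time the well inequality would degenerate, which requires combining the embedding constant characterization of $d$ with the sign of $\frac{d}{dt}Y$ rather than a naive continuity argument — essentially reproducing the Sattinger-type potential-well invariance of Fila and Fila \cite{FF89} in the present bulk-boundary coupled setting.
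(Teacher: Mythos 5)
Your overall strategy is the classical Sattinger-type first-contact-time argument, but its pivotal step is exactly the one the paper identifies as unavailable. You assume that at the first failure time $t_0$ one has, ``by continuity,'' the equality $2\varphi_1(\boldsymbol{v}(t_0))=(\alpha p_*+1)\varphi_2(\boldsymbol{v}(t_0))$. This requires $t\mapsto\varphi_1(\boldsymbol{v}(t))$, equivalently $t\mapsto J(\boldsymbol{v}(t))$, to be continuous. From Proposition~\ref{well} one only has $\boldsymbol{v}\in L^\infty(0,T;\boldsymbol{V})$ together with continuity into $L^{\alpha+1}(\Omega)\times L^{\alpha+1}(\Gamma)$, so $\varphi_2\circ\boldsymbol{v}$ is continuous but $J\circ\boldsymbol{v}$ is merely non-increasing (at best lower semicontinuous); hence $\varphi_1\circ\boldsymbol{v}=J\circ\boldsymbol{v}+\varphi_2\circ\boldsymbol{v}$ may jump downward, and the well inequality can fail at $t_0$ with a strict reverse inequality, never passing through the equality that would make $\boldsymbol{v}(t_0)$ admissible in the infimum \eqref{d}. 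The paper flags precisely this obstruction (``we do not have such regularity from the nonlinearity of $\gamma$'') and therefore takes a different route: it regularizes the equation by adding $\varepsilon\partial_t v_\varepsilon$ and $\varepsilon\partial_t v_{\Gamma,\varepsilon}$ (problem \eqref{af1e}--\eqref{af5e}), which yields $\boldsymbol{v}_\varepsilon\in C([0,T];\boldsymbol{V})$ and an energy \emph{equality} \eqref{inv2}, proves $\boldsymbol{v}_\varepsilon(t)\in{\mathcal W}$ for the regularized flow, and then passes to the limit $\varepsilon_k\to0$, preserving the strict well inequality by combining the convergence $\varphi_2(\boldsymbol{v}_{\varepsilon_k}(t))\to\varphi_2(\boldsymbol{v}(t))$ in \eqref{limit}, the characterization \eqref{dd} of $d$ through the best constant in \eqref{case2}, and the positive gap $\delta_0=d-J(\boldsymbol{v}_0)$.

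Two further points. First, your effort to exclude $\boldsymbol{v}(t_0)=\boldsymbol{0}$ is unnecessary: $\boldsymbol{0}$ belongs to ${\mathcal W}$ by definition, and by uniqueness the solution remains $\boldsymbol{0}$ afterwards, so hitting the origin does not threaten invariance. Worse, the heuristic you invoke there --- that the rate in $\frac{d}{dt}Y\le-2\varphi_1+(\alpha p_*+1)\varphi_2$ ``degenerates as $Y\to0$, preventing extinction at a finite $t_0$'' --- is the opposite of what happens in the fast-diffusion regime: the resulting differential inequality $Y'\le-C(\alpha)Y^{2/(\alpha+1)}$ has exponent less than one and forces extinction in finite time, which is the content of Theorems~\ref{ext1} and~\ref{ext2}. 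That portion of the argument should be deleted rather than repaired. If you wish to avoid the paper's approximation, a pointwise route is conceivable (for a fixed $t$ with $\boldsymbol{v}(t)\ne\boldsymbol{0}$, a failure of the well inequality combined with \eqref{case2} and \eqref{dd} already forces $J(\boldsymbol{v}(t))\ge d$, contradicting $J(\boldsymbol{v}(t))\le J(\boldsymbol{v}_0)<d$), but that is not the argument you wrote, and one must still justify the pointwise-in-$t$ use of $\varphi_1(\boldsymbol{v}(t))$ and of the energy inequality at every $t$, which is where the regularization earns its keep.
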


The proof of this lemma is given in Subsection~4.2.

\subsection{Finite time extinction} 
Let 
$(a,b) \in\{(1,0), (0,1)\}$, 
$(\lambda, \mu)=(0,1)$. 
The strategy for proving Theorems \ref{ext1} and \ref{ext2} is the same as that in  
\cite[Theorem~2.1]{FF90} and \cite[Proposition~5]{Ota81}. 
\begin{proof} 
Assume that $\boldsymbol{v}_0 \in {\mathcal W} \cap \boldsymbol{L}^\infty$. 
Then, 
applying \cite[Lemma~2.5]{FF90} we can prove $T_{\max}=\infty$ (see, Remark~4.1). 
Recalling \eqref{vol0}, we have the following equality:
\begin{align}
	\frac{d}{dt} Y \bigl( \boldsymbol{v}(t) \bigr) 
	& = 
	-2 \varphi_1  \bigl( \boldsymbol{v}(t) \bigr) + (\alpha q +1 ) \varphi_2 \bigl( \boldsymbol{v}(t) \bigr) 
	\nonumber \\
	& = -2 \varphi_1  \bigl( \boldsymbol{v}(t) \bigr) + \int_\Gamma v_\Gamma^{\alpha q+1}(t) d\Gamma.
	\label{vol}
\end{align}
for a.a.\ $t \in (0,\infty)$. 
From \eqref{coercive} and the {S}obolev imbedding in $2$-dimensions, there exists a positive constant $C_{\rm S}>0$ such that 
\begin{align*}
	\left( \int_\Gamma z_{\Gamma}^{\alpha q +1} d\Gamma \right) ^{1/(\alpha q+1)}
	& =|z_\Gamma|_{L^{\alpha q+1}(\Gamma)} 
	\nonumber \\
	& \le C_{\rm S} |z_\Gamma|_{V_\Gamma}
	\nonumber \\
	& \le 2^{1/2}C_{\rm S} C_{\rm C}^{-1/2}
	\varphi_1 (\boldsymbol{z}) ^{1/2};
\end{align*}
that is, there exists a positive constant $C>0$ such that 
\begin{equation}
	\varphi_2 (\boldsymbol{z} ) \le C \varphi_1 (\boldsymbol{z} )^{(\alpha q +1)/2}.
	\label{case2}
\end{equation}
for all $\boldsymbol{z} \in \boldsymbol{V}$.
From the assumption, we have $J(\boldsymbol{v}(t)) \le J(\boldsymbol{v}_0):=d_0<d$ for all $t\in [0,\infty)$ and 
$2\varphi_1(\boldsymbol{v}_0) > (\alpha q +1) \varphi_2(\boldsymbol{v}_0)$. 
Moreover, $J(\boldsymbol{v}(t))$ is monotone decreasing.
Hence, there exists $\varepsilon_1 \in (0,1)$ such that 
\begin{equation} 
	(1-\varepsilon_1) 2 \varphi_1 \bigl( \boldsymbol{v}(t) \bigr) \ge (\alpha q+1) \varphi_2 \bigl( \boldsymbol{v}(t) \bigr) 
	\label{otani}
\end{equation}
for all $t \in [0,\infty)$ (see, Remark~4.2 and {\sc Figure~1}). 
Now, from equations \eqref{coercive}, \eqref{vol}, and \eqref{otani}
\begin{align*}
	0 & = 
	\frac{d}{dt} Y \bigl( \boldsymbol{v}(t) \bigr) 
	+2 \varphi_1  \bigl( \boldsymbol{v}(t) \bigr) - (\alpha q +1 ) \varphi_2 \bigl( \boldsymbol{v}(t) \bigr) 
	\nonumber \\
	& \ge 
	\frac{d}{dt} Y \bigl( \boldsymbol{v}(t) \bigr) 
	+2 \varepsilon_1  \varphi_1  \bigl( \boldsymbol{v}(t) \bigr)
	\nonumber \\
	& \ge 
	\frac{d}{dt} Y \bigl( \boldsymbol{v}(t) \bigr) 
	+\varepsilon_1 C_{\rm C} \bigl| \boldsymbol{v}(t) \bigr|_{\boldsymbol{V}}^2
\end{align*}
that is, under the assumption $1/5 < m$, from the {S}obolev imbedding in $3$-dimension, 
there exists a positive constant $C(\alpha)$ depending upon $\varepsilon_1$, $C_{\rm C}$, and $\alpha$, such that 
\begin{equation*}
	\frac{d}{dt} Y \bigl( \boldsymbol{v}(t) \bigr)  \le - C(\alpha) Y \bigl( \boldsymbol{v}(t) \bigr)^{2/(\alpha+1)}
\end{equation*}
for a.a.\ $t \in (0,\infty)$. Recalling the fact that $0 <2/(\alpha+1)<1$, we deduce 
\begin{equation*}
	Y \bigl( \boldsymbol{v}(t) \bigr)  
	\le 
	\left( 
	\left[ 
	Y ( \boldsymbol{v}_0 )^{(\alpha-1)/(\alpha+1)}
	- \frac{\alpha-1}{\alpha+1} C(\alpha) t \right]^+ \right) ^{(\alpha+1)/(\alpha-1)},
\end{equation*}
that is, there exists $T_{\rm ext}>0$ depending upon $Y(\boldsymbol{v}_0)$ such that 
$\boldsymbol{v}(t)\equiv \boldsymbol{0}$ for all $t \ge T_{\rm ext}$. 
Moreover, the estimate from above \eqref{upper} holds.  
\end{proof}

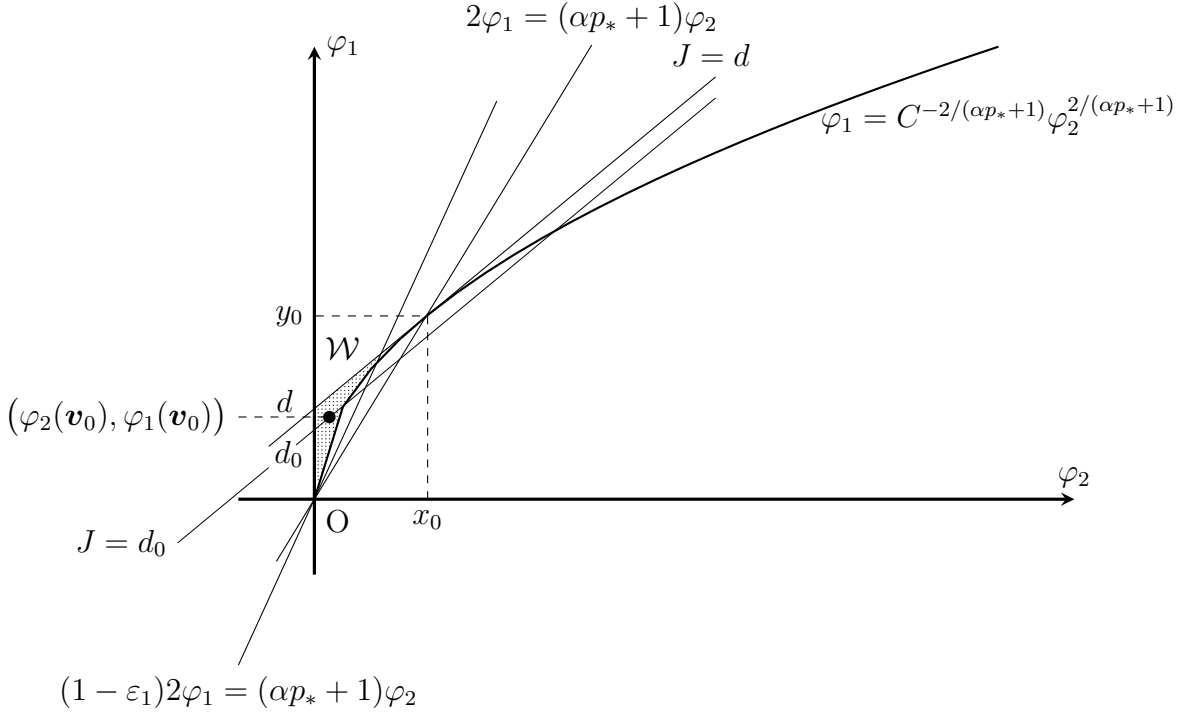
\begin{figure}[hbtp]
 \centering
 
\begin{tikzpicture}
 \draw[->,>=stealth,very thick] (-1,0)--(10,0)node[above]{$\varphi_2$}; 
 \draw[->,>=stealth,very thick] (0,-1)--(0,6)node[right]{$\varphi_1$}; 
 \draw (0,0)node[below right]{O}; 
 \draw[domain=-0.5:4.4] plot(1.2*\x,\x+1.2)node[above]{$J=d~~$};
 \draw[very thin, domain=-0.2:4.4] plot(1.2*\x,\x+0.92)node[right]{};
 \draw[very thin, domain=-0.5:-1.5] plot(1.2*\x,\x+0.92)node[left]{$J=d_0$};
 \draw[thick, domain=0:9] plot(\x,{2*pow(\x, 1/2)})node[below]{};
 \draw (9,5.5)node[below]{$\varphi_1=C^{-2/(\alpha p_*+1)} \varphi_2^{2/(\alpha p_*+1)}$};
 \draw (0,1.3)node[left]{$d~$};
 \draw (0,0.6)node[left]{$d_0$};

 \draw[dashed] (1.49,0)node[below]{$x_0$}--(1.49,2.43)--(0,2.43)node[left]{$y_0$}; 

 \draw (0,2)node[right]{$\mathcal W$};
 \draw[densely dotted] (0.05,1.20)--(0.05,0.25);
 \draw[densely dotted] (0.1,1.25)--(0.1,0.32);
 \draw[densely dotted] (0.15,1.3)--(0.15,0.5);
 \draw[densely dotted] (0.2,1.35)--(0.2,0.68);
 \draw[densely dotted] (0.25,1.4)--(0.25,0.84);
 \draw[densely dotted] (0.3,1.45)--(0.3,0.99);
 \draw[densely dotted] (0.35,1.45)--(0.35,1.12);
 \draw[densely dotted] (0.4,1.5)--(0.4,1.28);
 \draw[densely dotted] (0.45,1.55)--(0.45,1.33);
 \draw[densely dotted] (0.5,1.6)--(0.5,1.39);
 \draw[densely dotted] (0.55,1.65)--(0.55,1.48);
 \draw[densely dotted] (0.6,1.7)--(0.6,1.54);
 \draw[densely dotted] (0.65,1.7)--(0.65,1.62);
 \draw[densely dotted] (0.7,1.75)--(0.7,1.68);
 \draw[densely dotted] (0.75,1.8)--(0.75,1.72);
 \draw[densely dotted] (0.8,1.85)--(0.8,1.78);
 \draw[densely dotted] (0.85,1.9)--(0.85,1.82);
 \draw[densely dotted] (0.9,1.9)--(0.9,1.86);
 \draw[densely dotted] (0.95,2)--(0.95,1.94);
 \draw[domain=-0.5:3.65] plot(\x,1.65*\x)node[above]{$2\varphi_1=(\alpha p_*+1) \varphi_2$};
 \draw[domain=2.4:-1] plot(\x,2.2*\x)node[below]{$(1-\varepsilon_1) 2\varphi_1=(\alpha p_* +1) \varphi_2$};
 \draw(-0.05,1.08)node[right]{$\bullet$};
 \draw[dashed, domain=0.5:3](0.16,1.09)--(-1,1.09)node[left]{$\bigl(\varphi_2(\boldsymbol{v}_0), \varphi_1(\boldsymbol{v}_0) \bigr)$};
\end{tikzpicture}
\caption{Region of $\mathcal W$.}
\end{figure}

\begin{remark} 
We know that $\boldsymbol{v}(t)$ is the function in $\boldsymbol{L}^\infty$ for all $t \in [0,T_{\max})$ from the construction of the solution, 
see Proposition 4.1. 
However, 
to obtain the time global estimate with respect to $L^\infty$-norm, 
we do not apply Lemma~3.1 directly to $\boldsymbol{v}(t)$ 
any more since $g$ and $g_\Gamma$ are not global {L}ipschitz functions. 
To obtain it, we need the assumptions $1<p<5m$ or $q>1$. 
Indeed, we can apply the useful proposition with related to the {M}oser technique. 
Originally it was obtained by {A}likakos \cite[Lemma~3.2]{Ali79}, 
extended by {N}akao \cite[Lemma~3.1]{Nak85} for $m>1$, and {F}ila--{F}ilo 
\cite[Proposition~2.6]{FF90} for $0<m<1$. 
Under the assumption $1<p<5m$ or $q>1$, we  
can obtain the $L^\infty$-estimate independent of $t \in [0,T_{\rm max})$ \cite[Lemma~2.5]{FF90} and 
then we obtain that $T_{\rm max}=\infty$. 
Thus, we see that Theorem~2.2 is true also in the critical case $p =5m$ if we additionally assume that $T_{\rm max}=\infty$. 
\end{remark}
\smallskip

\begin{remark}
\label{remark}
The depth of the potential well $d$ of \eqref{d} is characterized by 
\begin{equation}
	d=\frac{\alpha p_* -1}{2} \left( \frac{2}{\alpha p_*+1}\right)^{(\alpha p_*+1)/(\alpha p_*-1)} C^{-2/(\alpha p_*-1)},
	\label{dd}
\end{equation}
where $C$ is the best constant satisfying \eqref{case2}. 
Let $(\varphi_2, \varphi_1)=(x_0,y_0)$ be the cross point between 
\begin{equation*}
	\begin{cases}
	\varphi_1=C^{-2/(\alpha p_*+1)} \varphi_2^{2/(\alpha p_*+1)}, \\
	2 \varphi_1 = (\alpha p_* + 1 ) \varphi_2.
	\end{cases}
\end{equation*}
Then, $J=d$ is the tangential line to the function 
$\varphi_1=C^{-2/(\alpha p_*+1)} \varphi_2^{2/(\alpha p_*+1)}$ at 
\begin{align*}
	& (x_0,y_0) \\
	& =\left( \left( \frac{2}{\alpha p_*+1} \right)^{(\alpha p_*+1)/(\alpha p_*-1)} C^{-2/(\alpha p_* -1)},  
	\frac{\alpha p_* +1}{2}  \left( \frac{2}{\alpha p_*+1} \right)^{(\alpha p_*+1)/(\alpha p_*-1)} C^{-2/(\alpha p_* -1)}\right)
\end{align*}
Thus, we can obtain \eqref{dd} from $d=y_0-x_0$.
\end{remark}

\subsection{Proof of invariance}

It remains to prove Lemma~4.1.
The proof of invariance is not difficult if we have 
$\boldsymbol{v} \in C([0,T_{\max}); \boldsymbol{V})$ (see \cite[Chapter~4]{FF89}, for example). 
However, in general, we do not have such regularity from the nonlinearity of $\gamma$. 
Therefore, we consider an approximation and obtain some information concerning the invariance: 

\begin{proof}
From the definition of $T_{\rm max}$, 
for each $\tau >0$ there exists $T_{\tau} \in (T_{\rm max}-\tau,T_{\rm max})$ such that 
the problem \eqref{f1}--\eqref{f5} has a unique solution on $[0,T_{\tau}]$. Moreover, 
recalling \eqref{inv} we put $M_\tau>0$ satisfying
\begin{equation*} 
	M e^{p_* (M+1)^{\alpha (p_*-1)} T_{\tau} /\alpha} \le M + M_\tau.
\end{equation*}
For each $\varepsilon>0$ and $0<T<\infty$, let us consider the following approximate problem
\begin{align}
	\partial _t \gamma(v_\varepsilon) + \varepsilon \partial_t v_\varepsilon -\Delta v_\varepsilon +a v_\varepsilon 
	=f \bigl( \gamma(v_\varepsilon) \bigr)
	& \quad \mbox{a.e.\ in } Q,
	\label{af1e}\\
	(v_\varepsilon)_{|_\Gamma}=v_{\Gamma, \varepsilon} 
	& \quad \mbox{a.e.\ on } \Sigma, 
	\label{af2e}\\
	\partial _t \gamma(v_{\Gamma, \varepsilon}) 
	+ \varepsilon \partial _t v_{\Gamma, \varepsilon}+ \partial _{\boldsymbol{\nu}} v_\varepsilon 
	- \Delta_\Gamma v_{\Gamma, \varepsilon} + b v_{\Gamma, \varepsilon} =f_{\Gamma} \bigl( \gamma(v_{\Gamma, \varepsilon} ) \bigr)
	& \quad \mbox{a.e.\ on } \Sigma,
	\label{af3e}\\
	v_\varepsilon(0)=v_0
	& \quad \mbox{a.e.\ in } \Omega, 
	\label{af4e}\\
	v_{\Gamma,\varepsilon}(0)=v_{\Gamma,0}
	& \quad \mbox{a.e.\ on }\Gamma,
	\label{af5e}
\end{align}
where $f:=\lambda g_{M+M_\tau}$ and $f:=\mu g_{\Gamma,M+M_\tau}$ are the same as in the proof of Proposition~4.1. 
Following Propositions~3.1 and 4.1 with the cut off technique, we prove that there exists a unique pair 
$\boldsymbol{v}_\varepsilon=(v_\varepsilon, v_{\Gamma, \varepsilon}) \in C([0,T]; \boldsymbol{V})$ of functions 
that satisfies \eqref{af1e}--\eqref{af5e} and 
\begin{align}
	& \frac{4\alpha}{(\alpha+1)^2} 
	\int_s^t 
	\left( 
	\bigl| \partial_t \bigl( \boldsymbol{v}_\varepsilon^{(\alpha+1)/2}(\tau) \bigr)\bigr|^2_{\boldsymbol{H}}
	+ \varepsilon \bigl| \partial_t \boldsymbol{v}_\varepsilon(\tau)
	\bigr|^2_{\boldsymbol{H}}\right)  d\tau 
		\nonumber \\
	 & \quad {}+ \varphi_1\bigl(\boldsymbol{v}_\varepsilon (t) \bigr) 
	 - \int _\Omega \widehat{f_\gamma } \bigl( v_\varepsilon (t) \bigr) dx
	 - \int _\Gamma \widehat{f}_{\Gamma,\gamma} \bigl( v_{\Gamma,\varepsilon} (t)  \bigr) d\Gamma
	 \nonumber \\
	& = \varphi_1\bigl(\boldsymbol{v}_\varepsilon (s) \bigr) 
	 - \int _\Omega \widehat{f_\gamma } \bigl( v_\varepsilon (s) \bigr) dx
	 - \int _\Gamma \widehat{f}_{\Gamma,\gamma} \bigl( v_{\Gamma,\varepsilon} (s)  \bigr) d\Gamma
	 \label{inv2}
\end{align}
for all $s, t \in [0,T]$ with $s \le t$. Moreover 
\begin{align*}
	\bigl| v_\varepsilon (t) \bigr|_{L^\infty(\Omega)} 
	& \le 
	M e^{p_* (M+1)^{\alpha (p_*-1)} t /\alpha } \\
	& \le M+M_\tau, 
	\\
	\bigl| v_{\Gamma,\varepsilon}(t) \bigr|_{L^\infty(\Gamma)} 
	& \le 
	M e^{p_*(M+1)^{\alpha(p_*-1)} t/ \alpha} \\
	& \le M+M_\tau
\end{align*}
for all $t \in [0,T_\tau]$. 
Therefore, we can replace 
$g_{M+M_\tau}$ by $g$ and $g_{\Gamma,M+M_\tau}$ by $g_\Gamma$ 
on the time interval $[0,T_\tau]$
in equations \eqref{af1e} and \eqref{af3e}. 
Furthermore, from \eqref{inv2} it can be shown that $J(\boldsymbol{v}_\varepsilon(t)) \le J (\boldsymbol{v}_0)$ and  
$\boldsymbol{v}_\varepsilon(t) \in {\mathcal W} \cap \boldsymbol{L}^\infty$ for all $t \in [0,T_\tau]$, 
since 
$\boldsymbol{v}_0 \in {\mathcal W} \cap \boldsymbol{L}^\infty$.
Moreover, 
we obtain the same kind of uniform estimates used in proving Propositions~\ref{global} and \ref{well}, such as there exists 
a subsequence $\{\varepsilon_k \}_{k \in \mathbb{N}}$ satisfying $\varepsilon_k \to 0$ as $k \to \infty$ such that 
\begin{align*}
	\boldsymbol{v}_{\varepsilon_k} 
	\to \boldsymbol{v}
	& \quad \mbox{weakly star in } L^\infty(0,T_\tau ;\boldsymbol{V}),  \\
	\gamma(\boldsymbol{v}_{\varepsilon_k}) 
	\to \gamma (\boldsymbol{v})  & \quad \mbox{weakly in } H^1(0,T_\tau;\boldsymbol{H}),  \\
	& \quad \mbox{weakly star in } L^\infty(0,T_\tau;\boldsymbol{V}),  \\
	\gamma(v_{\varepsilon_k}) \to \gamma(v) & \quad \mbox{strongly in } C\bigl([0,T_\tau];L^{r}(\Omega) \bigr), \\
    \gamma(v_{\Gamma, \varepsilon_k}) \to \gamma(v_{\Gamma}) & \quad \mbox{strongly in } C\bigl([0,T_\tau];L^{r}(\Gamma) \bigr) \quad {\rm for~} r \in [2,\infty), 
    \\
	\varepsilon_k \partial_t \boldsymbol{v}_{\varepsilon_k} \to \boldsymbol{0} & 
	\quad \mbox{strongly in } L^2(0,T_\tau;\boldsymbol{H})
\end{align*}
as $k \to \infty$, where 
$\boldsymbol{v}=(v,v_\Gamma)$ is the unique solution obtained in Proposition~4.1. 
Thus, we applied the compactness results \cite[Section 8, Corollary 4]{Sim87} again
to obtain the strong convergences
since, the compact imbeddings $V \cap L^\infty(\Omega) \hookrightarrow \hookrightarrow L^{r}(\Omega)$ and 
$V_\Gamma \hookrightarrow \hookrightarrow L^r(\Gamma)$ for $r \in [2, \infty)$ hold. 
Consequently, we deduce
\begin{equation}
	\varphi_2\bigl( \boldsymbol{v}_{\varepsilon_k}(t) \bigr)  \to 
	\varphi_2 \bigl( \boldsymbol{v}(t) \bigr)
	\label{limit}
\end{equation}
for all $t \in [0,T_{\rm max})$ 
as $k \to \infty$.
Recall the definition of ${\mathcal W}$.
We already know that $J(\boldsymbol{v}(t)) \le J(\boldsymbol{v}_0)<d$. 
Therefore, it is sufficient to prove that 
\begin{equation*}
	2 \varphi_1 \bigl( \boldsymbol{v}(t) \bigr) >(\alpha p_* +1) \varphi_2 \bigl( \boldsymbol{v}(t) \bigr)
\end{equation*}
for all $t \in [0,T_{\rm max})$. 
Let $t \in  [0,T_{\rm max})$. This is clear if $\varphi_2(\boldsymbol{v}(t))=0$, therefore we assume 
$\varphi_2(\boldsymbol{v}(t)) >0$. 
Now, from $d_0=J(\boldsymbol{v}_0)<d$, we obtain that $\delta_0:=d-d_0>0$ and 
\begin{equation*}
	J \bigl( \boldsymbol{v}(t)\bigr) \le d-\delta_0, \quad J \bigl( \boldsymbol{v}_{\varepsilon_k}(t)\bigr) \le d-\delta_0
\end{equation*}
for all $t \in [0,T_{\rm max})$. Thus, from \eqref{dd} we have 
\begin{align*}
	\varphi_1 \bigl( \boldsymbol{v}_{\varepsilon_k}(t) \bigr) -\varphi_2 \bigl( \boldsymbol{v}_{\varepsilon_k}(t) \bigr) 
	& \le d-\delta_0 \\
	& = \frac{\alpha p_* -1}{2} \left( \frac{2}{\alpha p_* +1}\right)^{(\alpha p_*+1)/(\alpha p_*-1)} C^{-2/(\alpha p_*-1)}- \delta_0.
\end{align*}
Now, $\boldsymbol{v}_{\varepsilon_k}(t) \in {\mathcal W}$, using \eqref{case2} we have 
\begin{align*}
	\frac{\alpha p_* - 1 }{2}\varphi_2 \bigl( \boldsymbol{v}_{\varepsilon_k}(t) \bigr) 
	& = \frac{\alpha p_* + 1 }{2}\varphi_2 \bigl( \boldsymbol{v}_{\varepsilon_k}(t) \bigr) 
	-\varphi_2 \bigl( \boldsymbol{v}_{\varepsilon_k}(t) \bigr)\\
	& < \varphi_1 \bigl( \boldsymbol{v}_{\varepsilon_k}(t) \bigr) -\varphi_2 \bigl( \boldsymbol{v}_{\varepsilon_k}(t) \bigr) \\
	& = \frac{\alpha p_* -1}{2} \left( \frac{2}{\alpha p_*+1}\right)^{(\alpha p_*+1)/(\alpha p_*-1)} C^{-2/(\alpha p_*-1)}- \delta_0 \\
	& \le  \frac{\alpha p_* -1}{2} \left( \frac{2}{\alpha p_*+1}\right)^{(\alpha p_*+1)/(\alpha p_*-1)} 
	\left( \frac{\varphi_1\bigl( \boldsymbol{v} (t) \bigr)^{(\alpha p_* + 1)/2}}{\varphi_2 \bigl(\boldsymbol{v} (t) \bigr)} \right)^{2/(\alpha p_*-1)}- \delta_0.
\end{align*}
Let $k \to \infty$ in the above; using \eqref{limit} we deduce that  
\begin{equation*}
	\varphi_2 \bigl( \boldsymbol{v}(t) \bigr) 
	\le \left( \frac{2}{\alpha p_*+1}\right)^{(\alpha p_*+1)/(\alpha p_*-1)} 
	\left( \frac{\varphi_1\bigl( \boldsymbol{v} (t) \bigr)^{(\alpha p_* + 1)/2}}{\varphi_2 \bigl(\boldsymbol{v} (t) \bigr)} \right)^{2/(\alpha p_*-1)}- \frac{2}{\alpha p_* - 1 }\delta_0,
\end{equation*}	
that is,
\begin{equation*}
	 (\alpha p_*+1)
	\varphi_2 \bigl( \boldsymbol{v}(t) \bigr)
	<
	2\varphi_1\bigl( \boldsymbol{v} (t) \bigr)
\end{equation*}
for all $t \in [0,T_{\rm max})$.
\end{proof}

Next, we prove Theorem \ref{ext2}.
Let 
$(a,b) \in\{(1,0), (0,1)\}$, $(\lambda, \mu)=(1,0)$.

\begin{proof}
Assume $1/5 < m <1$, $1 < p < 5m$. 
In this case,  the estimate \eqref{vol} is replaced by 
\begin{align*}
	\frac{d}{dt} Y \bigl( \boldsymbol{v}(t) \bigr) 
	& = 
	-2 \varphi_1  \bigl( \boldsymbol{v}(t) \bigr) + (\alpha p +1 ) \varphi_2 \bigl( \boldsymbol{v}(t) \bigr) 
	\nonumber \\
	& =
	-2 \varphi_1  \bigl( \boldsymbol{v}(t) \bigr) + \int_\Omega v^{\alpha p +1}(t) dx
\end{align*}
for a.a.\ $t \in (0,\infty)$. Now, there exists a positive constant $C_{\rm S}>0$ such that 
\begin{align*}
	\left( \int_\Omega z^{\alpha p +1} dx \right) ^{1/(\alpha p+1)}
	& =|z|_{L^{\alpha p+1}(\Omega)} 
	\nonumber \\
	& \le C_{\rm S} |z|_{V}
\end{align*}
for all $z \in V$ because $\alpha p+1 < 6$. Thus, there exists a positive constant $C>0$ such that 
\begin{equation*}
	\varphi_2 (\boldsymbol{z}) \le C \varphi_1 (\boldsymbol{z})^{(\alpha p +1)/2}.
\end{equation*}
Furthermore, 
there exists $\varepsilon_1 \in (0,1)$ such that 
\begin{equation*} 
	(1-\varepsilon_1) 2 \varphi_1 \bigl( \boldsymbol{v}(t) \bigr) \ge (\alpha p+1) \varphi_2 \bigl( \boldsymbol{v}(t) \bigr) 
\end{equation*}
for all $t \in [0,\infty)$ replaced with \eqref{otani}. 
Therefore, the proof is completely identical to the proof of 
Theorem~2.1. 
The proof of invariance of ${\mathcal W}$ is also the same; indeed, 
convergence \eqref{limit} holds since $\alpha p_*+1 = \alpha p+1 < 6$ from 
the additional assumption $1<p < 5m$. 
\end{proof}

\appendix
\section{}
\renewcommand{\theequation}{a.\arabic{equation}}
\setcounter{equation}{0}

We use the same settings as in the previous sections.

\begin{lemma} Let $\alpha \ge 1$. Then 
\label{fine}
\begin{gather*}
	\frac{4\alpha}{(\alpha+1)^2} 
	\bigl( r^{(\alpha+1)/2}-s^{(\alpha+1)/2} \bigr)^2 \le (r^\alpha-s^\alpha)(r-s), 
	\\
	|r^\alpha-s^\alpha|
	\le 	
	\frac{2\alpha}{\alpha+1} 
	\max\{ r,s \}^{(\alpha-1)/2}
	\bigl| r^{(\alpha+1)/2}-s^{(\alpha+1)/2} \bigr|,
	\\
	|r-s|
	\le 	
	\bigl| r^{(\alpha+1)/2}-s^{(\alpha+1)/2} \bigr|^{2/(\alpha+1)}
\end{gather*}
for all $r,s \ge 0$. 
\end{lemma}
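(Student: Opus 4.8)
The three inequalities are symmetric in $r$ and $s$ up to the absolute values and the $\max$, so the plan is to assume throughout that $r\ge s\ge 0$; then $\max\{r,s\}=r$, and since $t\mapsto t^{\beta}$ is nondecreasing for every $\beta>0$, all the quantities inside $|\cdot|$ are nonnegative and the absolute values may be dropped. I will write $\theta:=(\alpha+1)/2$ and $\kappa:=2\alpha/(\alpha+1)$; the hypothesis $\alpha\ge 1$ forces $\theta\ge 1$ and $\kappa\ge 1$, and $\kappa-1=(\alpha-1)/(\alpha+1)\ge 0$. It is convenient to substitute $a:=r^{\theta}$, $b:=s^{\theta}$, so that $a\ge b\ge 0$, $r=a^{2/(\alpha+1)}$, $r^{\alpha}=a^{\kappa}$, $r^{(\alpha-1)/2}=a^{\kappa-1}$, and similarly for $s$ and $b$.

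For the third inequality I would use superadditivity of $t\mapsto t^{\theta}$: since $\theta\ge 1$ one has $x^{\theta}+y^{\theta}\le (x+y)^{\theta}$ for all $x,y\ge 0$ (because $t^{\theta}\le t$ on $[0,1]$). Taking $x=r-s$ and $y=s$ gives $(r-s)^{\theta}+s^{\theta}\le r^{\theta}$, i.e. $(r-s)^{\theta}\le r^{\theta}-s^{\theta}$; raising to the power $1/\theta=2/(\alpha+1)>0$ yields $r-s\le\bigl(r^{(\alpha+1)/2}-s^{(\alpha+1)/2}\bigr)^{2/(\alpha+1)}$, which is the claim. For the second inequality, after the substitution the assertion becomes $a^{\kappa}-b^{\kappa}\le\kappa\,a^{\kappa-1}(a-b)$ (note $2\alpha/(\alpha+1)=\kappa$). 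This is precisely the statement that the convex function $\phi(t)=t^{\kappa}$ lies above each of its tangent lines: $\phi(b)\ge\phi(a)+\phi'(a)(b-a)$, which rearranges to $a^{\kappa}-b^{\kappa}\le\phi'(a)(a-b)=\kappa a^{\kappa-1}(a-b)$. Undoing the substitution gives $r^{\alpha}-s^{\alpha}\le\frac{2\alpha}{\alpha+1}\,r^{(\alpha-1)/2}\bigl(r^{(\alpha+1)/2}-s^{(\alpha+1)/2}\bigr)$.

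The first inequality is the main point. Using the same substitution, I would write the two differences as integrals,
\[
r^{\alpha}-s^{\alpha}=\int_b^a\kappa\,t^{\kappa-1}\,dt,\qquad r-s=\int_b^a\tfrac{2}{\alpha+1}\,t^{2/(\alpha+1)-1}\,dt ,
\]
and observe that the product of the two integrands is \emph{constant}: $\kappa t^{\kappa-1}\cdot\frac{2}{\alpha+1}t^{2/(\alpha+1)-1}=\frac{4\alpha}{(\alpha+1)^2}\,t^{(\kappa-1)+2/(\alpha+1)-1}=\frac{4\alpha}{(\alpha+1)^2}$, since $(\kappa-1)+\frac{2}{\alpha+1}-1=\frac{2\alpha+2}{\alpha+1}-2=0$. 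Applying the Cauchy--Schwarz inequality on $[b,a]$ to $\sqrt{\kappa t^{\kappa-1}}$ and $\sqrt{\frac{2}{\alpha+1}t^{2/(\alpha+1)-1}}$ then gives
\[
(r^{\alpha}-s^{\alpha})(r-s)\ge\Bigl(\int_b^a\sqrt{\tfrac{4\alpha}{(\alpha+1)^2}}\,dt\Bigr)^2=\tfrac{4\alpha}{(\alpha+1)^2}(a-b)^2=\tfrac{4\alpha}{(\alpha+1)^2}\bigl(r^{(\alpha+1)/2}-s^{(\alpha+1)/2}\bigr)^2 ,
\]
which is the desired estimate; the degenerate cases $a=b$ or $b=0$ are trivial.

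I expect the only genuinely delicate step to be locating the substitution that renders the product of the two derivatives constant — once that identity is spotted, Cauchy--Schwarz closes the first inequality in one line, and the remaining two are routine consequences of the superadditivity and convexity of real powers with exponent $\ge 1$.
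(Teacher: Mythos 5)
Your proof is correct, but it takes a genuinely different route from the paper's. The paper exploits homogeneity to reduce each inequality to the nonnegativity of a single one-variable function of $x=r/s\ge 1$ (the functions $F$, $G$, $H$ in the Appendix) and then argues by elementary calculus: sign of the derivative for $G$ and $H$, and an explicit factorization of $F$ as a difference of squares, $F=\bigl(\tfrac{\ell-1}{\ell}(x^\ell-1)+x^{1/2}(x^{\ell-1}-1)\bigr)\bigl(\tfrac{\ell-1}{\ell}(x^\ell-1)-x^{1/2}(x^{\ell-1}-1)\bigr)$ with $\ell=(\alpha+1)/2$, whose second factor is handled like $G$. You instead substitute $a=r^{(\alpha+1)/2}$, $b=s^{(\alpha+1)/2}$ and obtain the first (and main) inequality from the Cauchy--Schwarz inequality applied to the integral representations of $r^\alpha-s^\alpha$ and $r-s$, using the pleasant fact that the product of the two integrands is the constant $4\alpha/(\alpha+1)^2$; the second inequality becomes the tangent-line inequality for the convex function $t\mapsto t^{2\alpha/(\alpha+1)}$, and the third follows from superadditivity of $t\mapsto t^{(\alpha+1)/2}$. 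Your argument is more conceptual and explains where the constant $4\alpha/(\alpha+1)^2$ comes from (it is the square of the geometric mean of the two derivative exponents), whereas the paper's is a self-contained, purely computational verification; both are complete, and your treatment of the endpoint cases ($a=b$ or $b=0$, where the improper integral of $t^{2/(\alpha+1)-1}$ still converges since the exponent exceeds $-1$) is adequate.
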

\begin{proof} If $s=0$,  then we can prove that all inequalities hold.
Therefore, it is sufficient to prove that
\begin{gather*}
	F(x) := (x^\alpha-1)(x-1)-\frac{4\alpha}{(\alpha+1)^2} \bigl( x^{(\alpha+1)/2}-1 \bigr)^2, \\
	G(x):=\frac{2\alpha}{\alpha+1}x^{(\alpha-1)/2}\bigl (x^{(\alpha+1)/2}-1 \bigr)-(x^\alpha-1), \\
	H(x):=\bigl(x^{(\alpha+1)/2}-1 \bigr)-(x-1)^{(\alpha+1)/2}
\end{gather*}
are positive for $x \ge 1$. 
Firstly, from the basic calculation, we see that $G(1)=0$ and
\begin{equation*}
	G'(x) =\frac{\alpha(\alpha-1)}{\alpha+1} x^{(\alpha-3)/2} \bigl( x^{(\alpha+1)/2} -1\bigr) 
	\ge 0, 
\end{equation*}
these imply that $G(x) \ge 0$ for all $x \ge 1$.
Secondly, we put $\ell:=(\alpha+1)/2$; then
\begin{align*}
	F(x) & = ( x^{2\ell-1}-1 )(x-1)-\frac{2\ell-1}{\ell^2} (x^\ell-1)^2 \\
	& = \left( \frac{\ell-1}{\ell}\right)^2 (x^\ell-1)^2-x(x^{\ell-1}-1)^2 \\
	& = \left( \frac{\ell-1}{\ell} (x^\ell-1)+x^{1/2}(x^{\ell-1}-1) \right)  \left( \frac{\ell-1}{\ell} (x^\ell-1)-x^{1/2}(x^{\ell-1}-1) \right) \\
	& =:\left( \frac{\ell-1}{\ell} (x^\ell-1)+x^{1/2}(x^{\ell-1}-1) \right) F_1(x),
\end{align*}
where the multiplier of $F_1$ is positive. Moreover, we 
can prove that $F_1(x) \ge 0$ for all $x \ge 1$ just as we could for $G(x)$. 
This means that $F(x) \ge 0$ for all $x \ge 1$. 
Finally, $H(1)=0$ and 
\begin{equation*}
	H'(x) = \ell x^{\ell-1}-\ell (x-1)^{\ell -1} 
	\ge 0
\end{equation*}
for all $x \ge 1$ since $\ell x^{\ell-1}$ is monotone increasing for $x \ge 1$. 
This implies that $H(x) \ge 0$ for all $x \ge 1$. As a remark, a similar 
inequality for $H(x)$ can be obtained from the Tartar inequality.  
\end{proof}


\end{document}